\documentclass[a4paper]{article}

\usepackage[top=2.25cm, bottom=2.25cm, left=2.0cm, right=2.0cm]{geometry}
\usepackage{amsfonts}
\usepackage{latexsym}
\usepackage{amsmath}
\usepackage{setspace}
\usepackage{comment}
\usepackage{bm}
\usepackage{graphicx}
\usepackage{subcaption}
\usepackage{epstopdf}
\usepackage{color}
\usepackage{algorithmicx}
\usepackage{algorithm}
\usepackage{algpseudocode}
\usepackage{amsopn}
\usepackage{amssymb}
\usepackage{mathtools}
\usepackage{amsthm}
\usepackage{booktabs}
\usepackage{placeins}
\usepackage{siunitx}

\newcommand{\mini}{\mathop{\mbox{minimize}}}

\newcommand{\subj}{\mathop{\mbox{subject to}}}

\newcommand{\R}{\mathbb{R}}
\newcommand{\dom}{\mathop{\mathrm{dom}}}
\newcommand{\epi}{\mathop{\mathrm{epi}}}
\newcommand{\cl}{\mathop{\mathrm{cl}}}

\newcommand{\argmin}{\mathop{\rm argmin}\limits}

\newcommand{\cX}{\mathcal{X}}

\theoremstyle{definition}

\newtheorem{theorem}{Theorem}
\newtheorem{proposition}{Proposition}

\newtheorem{lemma}{Lemma}
\newtheorem{remark}{Remark}
\newtheorem{assumption}{Assumption}

\title{Augmented Lagrangian methods for convex optimization with priority constraints via an infeasibility control framework\thanks{This work was partially supported by the joint project of Kyoto University and Toyota Motor Corporation, titled “Advanced Mathematical Science for Mobility Society.}}
\author{Yuya Yamakawa\thanks{Graduate School of Management, Tokyo Metropolitan University, 1-1 Minami-Osawa, Hachioji, Tokyo 192-0397, Japan, \\ E-mail: yuya@tmu.ac.jp}, Shota Yamanaka\thanks{Toyota Motor Corporation, 1 Toyota-cho, Toyota, Aichi 471-8572, Japan, \\ E-mail:shota\_yamanaka\_aa@mail.toyota.co.jp}, Nobuo Yamashita\thanks{Graduate School of Informatics, Kyoto University, Yoshida-Honmachi, Sakyo-ku, Kyoto 606-8501, Japan, \\ E-mail: nobuo@i.kyoto-u.ac.jp}}
\date{}

\begin{document}
\maketitle

\begin{abstract}
We consider convex optimization problems with prioritized equality constraints, which may be infeasible. In many applications, such as network optimization and image reconstruction, it is often desirable to compute solutions that satisfy higher-priority constraints as much as possible even when no feasible solution exists. To address this issue, we introduce a new solution framework based on the notion of a \emph{hierarchically optimal shift}, which captures the hierarchy among constraints by sequentially minimizing constraint violations according to their priorities. Based on this concept, we define a \emph{hierarchically optimal solution} as an optimal solution of a suitably shifted problem, thereby providing a well-defined notion of optimality even in the absence of feasibility. Furthermore, we propose a novel augmented Lagrangian method equipped with a framework for infeasibility control. The core component is an \emph{infeasibility control problem}, which generates a sequence of approximate shifts converging to the hierarchically optimal shift. This approach enables explicit and systematic handling of prioritized constraint violations, in contrast to existing methods that treat all constraints uniformly. Under suitable assumptions, we show that the generated sequence of shifts converges to the hierarchically optimal shift, and that any accumulation point of the primal iterates is a hierarchically optimal solution. Numerical experiments show that the proposed method achieves solutions consistent with the prescribed constraint hierarchy for both feasible and infeasible cases.
\end{abstract}

\section{Introduction}
We consider the following problem.
\begin{align*}
\begin{aligned} \label{p0_general}
& \mini_{x \in \R^{n}} && f(x)
\\
& \subj && A_1 x - b_1 = 0, ~ A_2 x - b_2 = 0,
\end{aligned} \tag{$\mathrm{P_0}$}
\end{align*}
where $f \colon \R^n \to \R \cup \{ +\infty \}$ is a convex function, and $A_1 \in \R^{m_1 \times n}$, $A_2 \in \R^{m_2 \times n}$, $b_1 \in \R^{m_1}$, and $b_2 \in \R^{m_2}$ are constant matrices and vectors, respectively. It should be noted that problem~\eqref{p0_general} is not necessarily feasible in this study. Moreover, problem~\eqref{p0_general} can represent not only linear equality constraints but also inequality constraints because such constraints can be incorporated into the objective function $f$ via the indicator function of a nonempty closed convex set.
\par
Optimization problems often become infeasible due to the complexity of constraints and the presence of uncertainty. For example, when the number of constraints is large, when mutually conflicting constraints are included, or when observed data contain noise, there may exist no solution that satisfies all the constraints simultaneously. Such infeasibility naturally arises in applications such as optimization problems for transportation in networks~\cite{BDM+2011,YMC2009} and image reconstruction problems~\cite{Co2002,FNW2007,VF2009}. In addition, in model predictive control, even when constraints are infeasible, it is important to compute solutions that satisfy higher-priority constraints as much as possible~\cite{VSF1999,VSJ+2001}.
\par
To address such situations, approaches that compute solutions by minimizing constraint violations have been widely studied. For instance, sequential quadratic programming methods~\cite{BCN2010,BCW2014} and interior-point methods~\cite{DLS2020} have been proposed, where constraint violations are measured using norms or maximum functions and then minimized. However, these methods primarily focus on minimizing constraint violations and do not simultaneously consider the minimization of the objective function.
\par
On the other hand, optimization methods that take the objective function into account have been studied via multiplier methods equipped with the concept of a \emph{shift}~\cite{CG2016, DZ2023}. A shift is a vector added to the constraint functions in order to handle infeasibility. By introducing this concept, the structure of solutions that minimize constraint violations and the convergence properties of algorithms can be analyzed. In particular, Chiche and Gilbert~\cite{CG2016} investigated multiplier methods for infeasible convex quadratic problems and showed that the multiplier-generated shifts converge to an optimal shift, along with results on convergence rates. Dai and Zhang~\cite{DZ2023} extended these results to general convex optimization problems and established not only the convergence of the shifts but also that of the multiplier method itself. However, these studies treat all constraints equally and do not take into account any hierarchy among constraints. In practice, even when no feasible solution exists, it is often desirable to obtain a solution that satisfies more important constraints preferentially.
\par
Such a hierarchical viewpoint has been studied in the context of lexicographic optimization~\cite{Eh2005,Mi1999} and goal programming~\cite{TJR1998}, where priorities are assigned to multiple objective functions. In contrast, the present study introduces priorities into constraint violations and aims to minimize a single objective function while taking such priorities into account.
\par
In this paper, we consider convex optimization problems with prioritized constraints, with particular emphasis on the infeasible case, and propose a new augmented Lagrangian method for such problems. The main contributions of this work are twofold. 
\begin{itemize}
\item First, to define a meaningful solution concept in the infeasible case, we introduce the notion of a \emph{hierarchically optimal shift}, which is defined as a shift that minimizes the violation of the higher-priority constraints and, among such shifts, minimizes the violation of the lower-priority constraints. Based on this shift, we define a \emph{hierarchically optimal solution}, which yields a well-defined notion of optimality even in the absence of feasible points.

\item Second, we propose a novel framework for infeasibility control embedded within the proposed augmented Lagrangian method. Central to this framework is an \emph{infeasibility control problem}, which generates a sequence of approximate shifts converging to the hierarchically optimal shift. This approach enables explicit and systematic handling of prioritized constraint violations, in contrast to existing methods that treat all constraints uniformly. We show that, under suitable assumptions, the generated shifts converge to the hierarchically optimal shift, and that any accumulation point of the primal iterates is a hierarchically optimal solution. Numerical results confirm that the proposed method can find a hierarchically optimal solution that achieves the prescribed hierarchy of constraints in both feasible and infeasible cases.
\end{itemize} 
% In this paper, we consider convex optimization problems with prioritized constraints. In particular, we focus on the case where the problem is infeasible. To define a meaningful solution concept in this setting, we first introduce the notion of a \emph{hierarchically optimal shift}, which is defined as a shift that minimizes the violation of the higher-priority constraints and, among such shifts, minimizes the violation of the lower-priority constraints. By incorporating this shift into the constraint functions, we define a \emph{hierarchically optimal solution} as an optimal solution of the resulting shifted problem, thereby providing a meaningful notion of optimality even when the original problem is infeasible.
% Furthermore, we propose an augmented Lagrangian method for computing such hierarchically optimal solutions. Unlike the standard augmented Lagrangian method, the proposed method introduces an \emph{infeasibility control problem}, which generates a sequence of approximate shifts converging to the hierarchically optimal shift. Under suitable assumptions, we show that the sequence of shifts generated by the proposed method converges to the hierarchically optimal shift, and that any accumulation point of the primal iterates is a hierarchically optimal solution. Numerical experiments demonstrate that the proposed method successfully computes solutions that appropriately reflect the priority structure of the constraints in both feasible and infeasible cases.
\par
The remainder of this paper is organized as follows. In Section~2, we introduce the hierarchically optimal shift and define an approximate hierarchically optimal shift via the infeasibility control problem, and analyze their properties. In Section~3, we propose an augmented Lagrangian method incorporating the infeasibility control problem and establish its global convergence. Section~4 presents numerical experiments to demonstrate the effectiveness of the proposed method.

\subsection*{Notation}
Throughout this paper, we use the following notation. We denote by $\mathbb{N}$ and $\R$ the sets of positive integers and real numbers, respectively. Let $p \in \mathbb{N}$. We denote by $\R^{p}$ the $p$-dimensional Euclidean space. Moreover, we define $\R_{+}^{p} \coloneqq \{ x \in \R^{p} \colon x \geq 0 \}$ and $\R_{++}^{p} \coloneqq \{ x \in \R^{p} \colon x > 0 \}$. 
%For any $v \in \R^{p}$ and $w \in \R^{p}$, the inner product of $v$ and $w$ is defined by $\langle v, w \rangle \coloneqq v^{\top} w$, where $\top$ denotes the transpose of vectors or matrices. 
We denote by $\Vert v \Vert$ the Euclidean norm of $v \in \R^p$, that is, $\Vert v \Vert \coloneqq \sqrt{v^{\top} v}$, where $\top$ denotes the transpose of vectors or matrices. The symbol $e$ denotes the vector of all ones of appropriate dimension. Let $\varphi \colon \R^{p} \to \R$ be a convex function on $\R^p$. The effective domain of $\varphi$ is represented as $\dom \varphi$. The subdifferential of $\varphi$ at $x \in \R^p$ is written by $\partial \varphi(x)$. The conjugate function of $\varphi$ is defined by $\varphi^{\ast}(y) \coloneqq \sup \{ y^{\top} x - \varphi (x) \colon x \in \dom \varphi \}$. The epigraph of $\varphi$ is denoted by $\epi \varphi$. The closure of $\varphi$ is represented as $\cl \varphi$. For a set $X \subset \R^{p}$, the relative interior of $X$ is denoted by ${\rm ri}(X)$, and the closure of $X$ is written as $\cl (X)$. For a non-empty convex set $C \subset \R^{p}$, the normal cone of $C$ at $x$ is represented by ${\cal N}_{C}(x)$. For a non-empty set $X \subset \R^{p}$, the indicator function on $X$ is written as $\delta_{X}$. For $\varepsilon \geq 0$, a set $Z \subset \R^p$, and a function $\psi \colon Z \to \R$, we say that $\bar{z}$ is an $\varepsilon$-optimal solution of the problem of minimizing $\psi$ over $Z$ if $\bar{z} \in Z$ and $\psi(\bar{z}) \leq \inf \{ \psi(z) \colon z \in Z \} + \varepsilon$.

\section{Shift for convex optimization problems with priority constraints}

In this section, we consider the case where problem~\eqref{p0_general} is infeasible. 
First, for the infeasible problem~\eqref{p0_general}, we define a hierarchically optimal shift, which represents the best feasible shift when the priority of constraints is taken into account. 
Second, in order to find an optimal solution to the problem equipped with a hierarchically optimal shift, we introduce an approximate hierarchically optimal shift that converges to the hierarchically optimal one, and describe its theoretical properties.

\subsection{Shifted problem and hierarchically optimal shift}
\noindent
For $s_1 \in \R^{m_1}$ and $s_2 \in \R^{m_2}$, we introduce the following problem related to~\eqref{p0_general}:
\begin{align}
\label{p1_linear_slack_org}
\begin{aligned}
& \mini_{x \in \R^{n}} && f(x) 
\\
& \subj && A_1 x - b_1 + s_1 = 0, ~ A_2 x - b_2 + s_2 = 0, ~ x \in \dom f.
\end{aligned}
\tag{$\mathrm{P}_s$}
\end{align}
Here, $(s_1, s_2)$ is called a \emph{shift}, and problem~\eqref{p1_linear_slack_org} is referred to as the \emph{shifted problem}. Note that the shift is treated as a given parameter and is fixed when solving problem~\eqref{p1_linear_slack_org}. Note also that there always exists a shift that makes problem~\eqref{p1_linear_slack_org} feasible. 
Indeed, since $f$ is proper, we can take $\bar{x} \in \dom f$ and define $s_1 \coloneqq b_1 - A_1 \bar{x}$ and $s_2 \coloneqq b_2 - A_2 \bar{x}$, which ensures that $\bar{x}$ is feasible for \eqref{p1_linear_slack_org}. 
Such a shift is called a \emph{feasible shift}, and the set of all feasible shifts is referred to as the \emph{feasible shift set}, defined by
\begin{align*}
{\cal S} \coloneqq \left\{ (s_{1}, s_{2}) \in \R^{m_{1}} \times \R^{m_{2}} \colon \, \exists x \in \dom f, \, s_{1} = b_{1} - A_{1} x, \, s_{2} = b_{2} - A_{2} x \right\}.
\end{align*}

In this study, we propose an augmented Lagrangian method for problem~\eqref{p0_general}, where the feasibility is not assumed. However, since problem~\eqref{p0_general} may be infeasible, one may have to deal with a problem that has no optimal solution. To overcome this difficulty, we instead consider applying the augmented Lagrangian method to the shifted problem~(P$_{s^{(k)}}$) with a sequence $\{ s^{(k)} \}$ of shifts. In this setting, the proposed method is required not only to solve subproblems of the augmented Lagrangian method for~(P$_{s^{(k)}}$) but also to simultaneously determine a shift $s^{(k)}$ that properly reflects the priority structure between the higher- and lower-priority constraints. Therefore, in designing the proposed method, we need to consider a situation in which the shift varies at each iteration, and thus the following assumption is required to guarantee the solvability of problem~\eqref{p1_linear_slack_org} for every feasible shift.

\begin{assumption} \label{ass:ps_solvable}
Problem~\eqref{p1_linear_slack_org} has at least one optimal solution for each $s \in {\cal S}$.
\end{assumption}

\par
Taking into account the priority of the constraints, we define a \emph{hierarchically optimal shift} $s^{\ast} \coloneqq (s_1^{\ast}, s_2^{\ast})$ as follows:
\begin{align}
& s_1^{\ast} \coloneqq b_1 - A_1 x^{\dag}, \quad x^{\dag} \in \argmin_{x \in \dom f} \frac{1}{2} \Vert b_1 - A_1 x \Vert^2, \label{eq:true_solution} 
\\
& s_2^{\ast} \coloneqq b_2 - A_2 x^{\ddag}, \quad x^{\ddag} \in \argmin_{x \in \cX_1} \frac{1}{2} \Vert b_2 - A_2 x\Vert^2. \label{eq:true_solution2}
\end{align}
where ${\cal X}_1 \coloneqq \{x \in \dom f \colon s_1^{\ast} = b_1 - A_1 x \}$.
%Note that the hierarchically optimal shift $s^{\ast}$ is a feasible shift that minimizes constraint violations in a hierarchical manner. 
To ensure that $s_1^{\ast}$ and $s_2^{\ast}$ are well-defined, we require the existence of $x^{\dag}$ and $x^{\ddag}$. Accordingly, we impose the following assumption:
\begin{assumption} \label{twoprob_solvable}
The following conditions hold:
\begin{align*}
\argmin_{x \in \dom f} \frac{1}{2} \Vert b_1 - A_1 x \Vert^2 \not = \emptyset, \quad \argmin_{x \in \cX_1} \frac{1}{2} \Vert b_2 - A_2 x \Vert^2 \not = \emptyset.
\end{align*}
\end{assumption}

\begin{remark}
The first condition of Assumption~\ref{twoprob_solvable} ensures that $s_1^{\ast}$ is well-defined. Then, by using $s_1^{\ast}$, the set $\cX_1$ is well-defined, and the second condition ensures that $s_2^{\ast}$ is also well-defined.
\end{remark}

\begin{remark}
It should be emphasized that the closedness of $\dom f$ is not assumed in the proofs of Lemma~\ref{lem:true_sol_unique}, Lemma~\ref{lem:equivalent}, Proposition~\ref{pro:solvability_Rsigma}, and Lemma~\ref{lem:caA_calB}. The analysis of this study is based instead on Assumption~\ref{twoprob_solvable}, which directly postulates the existence of the hierarchically optimal shift. Accordingly, we include explicit proofs for these results, even though some of them would be immediate if $\dom f$ were closed. We also note that Assumption~\ref{twoprob_solvable} can be satisfied even when $\dom f$ is not closed, so that the proposed framework applies to such settings as well.
\end{remark}

\noindent
The shift $s_1^{\ast}$ defined above minimizes the violation of the higher-priority constraints, while $s_2^{\ast}$ minimizes the violation of the lower-priority constraints over points $x \in \cX_1$ that already achieve the minimal violation of the higher-priority constraints. 
It can be shown, as stated in the following lemma, that $s_1^{\ast}$ and $s_2^{\ast}$ together constitute the unique optimal solution to a certain optimization problem.

\begin{lemma}\label{lem:true_sol_unique}
Suppose that Assumption~{\rm \ref{twoprob_solvable}} is satisfied. Then, the following two equalities hold:
\begin{align*}
& \min \left \{ \frac{1}{2} \Vert s_{1} \Vert^{2} \colon s_{1} \in {\cal S}_{1} \right \} = \min \left\{ \frac{1}{2} \Vert A_{1} x - b_{1} \Vert^{2} \colon x \in \dom f \right\},
\\
& \min \left \{ \frac{1}{2} \Vert s_{2} \Vert^{2} \colon s_{2} \in {\cal S}_{2} \right \} = \min \left\{ \frac{1}{2} \Vert A_{2} x - b_{2} \Vert^{2} \colon x \in {\cal X}_{1} \right\},
\end{align*}
where ${\cal S}_{1} \coloneqq \{ s_1 \in \R^{m_{1}} \colon \exists x \in \dom f, \, s_1 = b_{1} - A_{1} x \}$ and ${\cal S}_{2} \coloneqq \{ s_2 \in \R^{m_{2}} \colon \exists x \in {\cal X}_{1}, \, s_2 = b_{2} - A_{2} x \}$. Moreover, the shifts $s_{1}^{\ast}$ and $s_{2}^{\ast}$ defined by~\eqref{eq:true_solution} and~\eqref{eq:true_solution2}, respectively, are the unique solutions of $\min \{ \frac{1}{2} \Vert s_{1} \Vert^{2} \colon s_{1} \in {\cal S}_{1} \}$ and $\min \{ \frac{1}{2} \Vert s_{2} \Vert^{2} \colon s_{2} \in {\cal S}_{2} \}$, respectively. 
\end{lemma}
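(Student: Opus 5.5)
The plan is to reduce each of the two claimed equalities to a change of variables between $x$ and the shift it induces. The existence of minimizers comes from Assumption~\ref{twoprob_solvable}. Uniqueness will follow from convexity of ${\cal S}_1$ and ${\cal S}_2$ together with strict convexity of $\frac12\Vert\cdot\Vert^2$. No closedness of $\dom f$ will be used.

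\emph{First equality.} By definition, ${\cal S}_1$ is exactly the image of $\dom f$ under the affine map $x\mapsto b_1-A_1x$. Hence
\[
\left\{\tfrac12\Vert s_1\Vert^2 : s_1\in{\cal S}_1\right\}=\left\{\tfrac12\Vert A_1x-b_1\Vert^2 : x\in\dom f\right\},
\]
and the two infima coincide. By Assumption~\ref{twoprob_solvable}, the right-hand infimum is attained at $x^{\dag}$. Then $s_1^\ast=b_1-A_1x^{\dag}\in{\cal S}_1$ attains the left-hand infimum, so both are minima and they are equal.

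\emph{Second equality.} Here ${\cal X}_1$ is well defined because $s_1^\ast$ is, and ${\cal S}_2$ is the image of ${\cal X}_1$ under $x\mapsto b_2-A_2x$. The same argument applies verbatim, now using $x^{\ddag}$ from the second part of Assumption~\ref{twoprob_solvable}. This shows that $s_2^\ast\in{\cal S}_2$ is a minimizer.

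\emph{Uniqueness.} The step needing some care is convexity of the feasible sets.
\begin{itemize}
\item $\dom f$ is convex because $f$ is convex, so its affine image ${\cal S}_1$ is convex.
\item ${\cal X}_1=\dom f\cap\{x: A_1x=b_1-s_1^\ast\}$ is an intersection of two convex sets, hence convex, so ${\cal S}_2$ is convex.
\end{itemize}
Now suppose $s,s'$ are two distinct minimizers of $\frac12\Vert\cdot\Vert^2$ over a convex set, with common optimal value $v$. Their midpoint is feasible, and by the parallelogram identity
\[
\tfrac12\Big\Vert\tfrac{s+s'}2\Big\Vert^2=\tfrac12 v+\tfrac12 v-\tfrac18\Vert s-s'\Vert^2<v ,
\]
which contradicts minimality. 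Hence $s_1^\ast$ and $s_2^\ast$ are the unique solutions.

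A side consequence is that $s_1^\ast$ and $s_2^\ast$ do not depend on which $x^{\dag}$ or $x^{\ddag}$ is chosen. Consequently ${\cal X}_1$ is also independent of that choice.

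The main obstacle is modest. Minimizers of the shift problems cannot come from projecting onto a closed set, because ${\cal S}_i$ need not be closed when $\dom f$ is not. This is why attainment must be transferred from Assumption~\ref{twoprob_solvable} through the image map rather than obtained by a projection argument.
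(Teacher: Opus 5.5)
Your proof is correct and follows essentially the same route as the paper. You identify the two optimal values via the affine change of variables $x \mapsto b_i - A_i x$, transfer attainment from $x^{\dag}$ and $x^{\ddag}$ (Assumption~\ref{twoprob_solvable}), and obtain uniqueness from strict convexity of $\frac12\Vert\cdot\Vert^2$ over the convex sets ${\cal S}_1$ and ${\cal S}_2$. Your image-set identity is a slightly more direct substitute for the paper's $\varepsilon$-argument, and your explicit convexity check and parallelogram computation spell out what the paper dismisses as ``easily verified.''
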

\begin{proof}
To begin with, let $\alpha_{1} \coloneqq \inf \{ \frac{1}{2}\Vert s_{1} \Vert^{2} \colon s_{1} \in {\cal S}_{1} \}$ and $\beta_{1} \coloneqq \min \{ \frac{1}{2} \Vert b_{1} - A_{1} x \Vert^{2} \colon x \in \dom f \}$. Moreover, let $s_{1}^{\ast} \in \R^{m_{1}}$ and $x^{\dag} \in \R^{n}$ be defined by~\eqref{eq:true_solution}. In what follows, we will show that $\alpha_{1} = \beta_{1}$. Because $x^{\dag} \in \dom f$ is satisfied, we readily have
\begin{align}
b_{1} - A_{1} x^{\dag} \in {\cal S}_{1} = \{ b_{1} - A_{1} x \in \R^{m_{1}} \colon x \in \dom f \} \label{b1A1xdag_in_S1}
\end{align}
It then follows from the definitions of $\alpha_{1}$, $\beta_{1}$, and $x^{\dag}$ that
\begin{align}
\alpha_{1} \leq \frac{1}{2} \Vert b_{1} - A_{1} x^{\dag} \Vert^{2} = \beta_{1}. \label{eq:alpha1_beta1}
\end{align}
Let $\varepsilon > 0$ be arbitrary. The definition of $\alpha_{1}$ guarantees the existence of $s_{\varepsilon} \in {\cal S}_{1}$ satisfying $\frac{1}{2} \Vert s_{\varepsilon} \Vert^{2} < \alpha_{1} + \varepsilon$. By noting $s_{\varepsilon} \in {\cal S}_{1}$, there exists $x_{\varepsilon} \in \dom f$ such that $s_{\varepsilon} = b_{1} - A_{1} x_{\varepsilon}$. From $x_{\varepsilon} \in \dom f$ and $x^{\dag} \in \argmin \{ \frac{1}{2} \Vert b_1 - A_1 x \Vert^2 \colon x \in \dom f \}$, we obtain
\begin{align*}
\beta_{1} = \frac{1}{2} \Vert b_{1} - A_{1} x^{\dag} \Vert^{2} \leq \frac{1}{2} \Vert b_{1} - A_{1} x_{\varepsilon} \Vert^{2} = \frac{1}{2} \Vert s_{\varepsilon} \Vert^{2} < \alpha_{1} + \varepsilon, 
\end{align*}
that is, $\beta_{1} \leq \alpha_{1}$ because $\varepsilon$ is an arbitrary number. Hence, we have from~\eqref{eq:alpha1_beta1} that $\alpha_{1} = \beta_{1}$.
\par
Next, we will show that $s_1^{\ast}$ is a unique optimal solution of $\min \{ \frac{1}{2} \Vert s_{1} \Vert^{2} \colon s_{1} \in {\cal S}_{1} \}$. Now, we notice that $\inf\{ \frac{1}{2} \Vert s_{1} \Vert^{2} \colon s_{1} \in {\cal S}_{1} \} = \alpha_{1} = \beta_{1} = \frac{1}{2} \Vert b_{1} - A_{1} x^{\dag} \Vert^{2} = \frac{1}{2} \Vert s_{1}^{\ast} \Vert^{2}$, and recall that $s_{1}^{\ast} = b_{1} - A_{1} x^{\dag} \in {\cal S}_{1}$ from~\eqref{b1A1xdag_in_S1}. These facts mean that $s_{1}^{\ast}$ solves $\min \{ \frac{1}{2} \Vert s_{1} \Vert^{2} \colon s_{1} \in {\cal S}_{1} \}$. Moreover, the uniqueness of $s_{1}^{\ast}$ can be easily verified because $\min \{ \frac{1}{2} \Vert s_{1} \Vert^{2} \colon s_{1} \in {\cal S}_{1} \}$ is a convex optimization problem with a strongly convex objective function. The same argument applies to the proof of the equation $\min \{ \frac{1}{2}\Vert s_{2} \Vert^{2} \colon s_{2} \in {\cal S}_{2} \} = \min \{ \frac{1}{2} \Vert b_{2} - A_{2} x \Vert^{2} \colon x \in {\cal X}_{1} \}$ and the uniqueness of $s_{2}^{\ast}$.
\end{proof}

\par
Now, we consider the following optimization problem associated with the hierarchically optimal shift $s^{\ast} = (s_1^{\ast}, s_2^{\ast})$:
\begin{align} 
\begin{aligned}
\label{p1_linear_slack_opt}
&\mini_{x \in \R^{n}} && f(x) 
\\
& \subj && A_1 x - b_1 + s_1^{\ast} = 0, ~ A_2 x - b_2 + s_2^{\ast} = 0, ~ x \in \dom f.
\end{aligned}
\tag{$\mathrm{P}_{s^{\ast}}$}
\end{align}

\noindent
In this study, we impose the following assumption on problem~\eqref{p1_linear_slack_org}.
\begin{assumption}\label{ass:sast_solvable}
~%The following two conditions hold:
\begin{itemize}
\item[{\rm (i)}] The function $f$ is closed proper convex.
\item[{\rm (ii)}] Problem~\eqref{p1_linear_slack_opt} satisfies Slater's constraint qualification, i.e., there exists $\bar{x} \in {\rm ri}(\dom f)$ such that $A_1 \bar{x} - b_1 + s_1^{\ast} = 0$ and $A_2 \bar{x} - b_2 + s_2^{\ast} = 0$.
\end{itemize}
\end{assumption}

\noindent
The objective of this study is to find an optimal solution of problem~\eqref{p1_linear_slack_opt}, for which we propose an augmented Lagrangian method. Note that the solvability of problem~\eqref{p1_linear_slack_opt} is guaranteed by Assumption~\ref{ass:ps_solvable}. 
\par
We define any feasible solution of problem~\eqref{p1_linear_slack_opt} as a \emph{hierarchically minimal violation point}, and its optimal solution as a \emph{hierarchically optimal solution}. By definition, a hierarchically minimal violation point is a point that minimizes the violation of the lower-priority constraints among those that minimize the violation of the priority constraints. 
Moreover, a hierarchically optimal solution is a point that minimizes the objective function among all hierarchically minimal violation points.

\subsection{Approximate hierarchically optimal shifts via an infeasibility control problem} \label{sec:app_opt_shift}

The proposed method is required not only to compute an optimal solution of problem~\eqref{p1_linear_slack_opt}, but also to generate a sequence of shifts that converges to the hierarchically optimal shift $s^{\ast}$. To this end, we first introduce an infeasibility control problem. Second, by using the problem, we define an \emph{approximate hierarchically optimal shift} and investigate its fundamental properties.

To begin with, we consider the following \emph{infeasibility control problem}:
\begin{align}
\label{p_sigma}
\begin{aligned}
& \mini_{(x,s_1,s_2) \in {\cal W}} && \frac{\sigma_1}{2} \Vert s_1 \Vert^2 + \frac{\sigma_2}{2} \Vert s_2 \Vert^2
\\
& \subj && s_1 = b_1 - A_1 x, ~ s_2 = b_2 - A_2 x, ~ x \in \dom f,
\end{aligned}
\tag{$\mathrm{R}_\sigma$}
\end{align}
where ${\cal W} \coloneqq \R^{n} \times \R^{m_1} \times \R^{m_2}$, and $\sigma \coloneqq (\sigma_1, \sigma_2)$ is a parameter satisfying $\sigma_1 > 0$ and $\sigma_2 > 0$.

\noindent
In what follows, we discuss the solvability of~\eqref{p_sigma}. For simplicity, we define $m \coloneqq m_1 + m_2$. Moreover, we define $M_{\sigma} \in \R^{m \times m}$, $A \in \R^{m \times n}$, and $b \in \R^{m}$ as
\begin{align*}
M_{\sigma} \coloneqq
\begin{bmatrix}
\sigma_{1} I & O
\\
O & \sigma_{2} I
\end{bmatrix}
, \quad A \coloneqq
\begin{bmatrix}
A_{1}
\\
A_{2}
\end{bmatrix}
, \quad b \coloneqq
\begin{bmatrix}
b_{1}
\\
b_{2}
\end{bmatrix}
,
\end{align*}
\noindent
respectively. To ensure the solvability~\eqref{p_sigma}, we need the following assumption.
\begin{assumption} \label{ass:Adomf_closed}
The set $A(\dom f) = \{ A x \in \R^{m} \colon x \in \dom f \}$ is closed.
\end{assumption}

\begin{remark}
Examples of sufficient conditions for Assumption~\ref{ass:Adomf_closed} to hold are that $\dom f = \R^n$ and that $\dom f$ is a compact set of $\R^{n}$.
\end{remark}

\noindent
Moreover, we show that~\eqref{p_sigma} is equivalent to the following problem:
\begin{align}
\label{opt:Rsigma2}
\begin{aligned}
& \mini_{s \in \R^{m}} && \frac{1}{2} s^{\top} M_{\sigma} s
\\
& \subj && s \in {\cal S},
\end{aligned}
\tag{${\cal R}_{\sigma}$}
\end{align}
where we recall that ${\cal S} = \{ b - A x \in \R^{m} \colon x \in \dom f \}$.

\begin{lemma} \label{lem:equivalent}
Let $\sigma \in \R^2_{++}$. The following two statements hold:
\begin{itemize}
\item[{\rm (a)}] If $\bar{s} \coloneqq (\bar{s}_{1}, \bar{s}_{2}) \in \R^{m_1} \times \R^{m_2}$ is an optimal solution of~\eqref{opt:Rsigma2}, then there exists $\bar{x} \in \R^{n}$ such that $(\bar{x}, \bar{s}_{1}, \bar{s}_{2})$ solves~\eqref{p_sigma}.
\item[{\rm (b)}] If $(x^{\ast}, s_{1}^{\ast}, s_{2}^{\ast}) \in \R^{n} \times \R^{m_1} \times \R^{m_2}$ is an optimal solution of~\eqref{p_sigma}, then $s^{\ast} \coloneqq (s_{1}^{\ast}, s_{2}^{\ast})$ solves~\eqref{opt:Rsigma2}. 
\end{itemize}
\end{lemma}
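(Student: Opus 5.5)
The plan is a direct comparison of feasible sets and objective values, using the observation that the objective of \eqref{p_sigma} depends only on $(s_1,s_2)$ and equals $\frac{1}{2} s^{\top} M_{\sigma} s$ with $s = (s_1,s_2)$. Throughout I write $\phi_\sigma(s) \coloneqq \frac{1}{2} s^{\top} M_{\sigma} s = \frac{\sigma_1}{2}\Vert s_1\Vert^2 + \frac{\sigma_2}{2}\Vert s_2\Vert^2$. The basic fact to record first is that a triple $(x,s_1,s_2)$ is feasible for \eqref{p_sigma} if and only if $x \in \dom f$ and $s = b - Ax$. Consequently, the projection $(x,s_1,s_2)\mapsto (s_1,s_2)$ maps the feasible set of \eqref{p_sigma} onto ${\cal S}$. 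Conversely, every $s \in {\cal S}$ lifts to a feasible triple $(x,s_1,s_2)$ for some $x\in\dom f$ with $s = b - Ax$, and this lift has the same objective value $\phi_\sigma(s)$. No closedness of $\dom f$ or Assumption~\ref{ass:Adomf_closed} is needed, since existence of optimal solutions is assumed in both statements.

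For (a), let $\bar s$ solve \eqref{opt:Rsigma2}. Since $\bar s \in {\cal S}$, I would pick $\bar x \in \dom f$ with $\bar s = b - A\bar x$, i.e., $\bar s_1 = b_1 - A_1\bar x$ and $\bar s_2 = b_2 - A_2 \bar x$. Then $(\bar x,\bar s_1,\bar s_2)$ is feasible for \eqref{p_sigma}. For any feasible $(x,s_1,s_2)$ of \eqref{p_sigma}, the vector $(s_1,s_2)=b-Ax$ lies in ${\cal S}$. Optimality of $\bar s$ then gives $\phi_\sigma(\bar s)\le \phi_\sigma(s_1,s_2)$, which is exactly the optimality of $(\bar x,\bar s_1,\bar s_2)$ in \eqref{p_sigma}.

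For (b), let $(x^\ast,s_1^\ast,s_2^\ast)$ solve \eqref{p_sigma}. Feasibility gives $x^\ast\in\dom f$ and $s^\ast=b-Ax^\ast$, so $s^\ast\in{\cal S}$. Given any $s\in{\cal S}$, I would choose $x\in\dom f$ with $s=b-Ax$. Then $(x,s_1,s_2)$ is feasible for \eqref{p_sigma}, so $\phi_\sigma(s^\ast)\le\phi_\sigma(s)$, and hence $s^\ast$ solves \eqref{opt:Rsigma2}.

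There is no real obstacle here. The only care needed is to invoke the definition of ${\cal S}$ explicitly when producing the witness $x$ (the $\exists x\in\dom f$ in its definition), and to note that the objective is independent of $x$. This is the same style of argument as in the proof of Lemma~\ref{lem:true_sol_unique}, and it can be written in a few lines for each part.
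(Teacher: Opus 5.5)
Your proof is correct and uses the same idea as the paper: the objective depends only on $s$, feasible triples of \eqref{p_sigma} project onto ${\cal S}$, and each $s\in{\cal S}$ lifts to a feasible triple through a witness $x\in\dom f$. The paper routes the comparison through an $\varepsilon$-approximating sequence for the infimum, whereas you compare directly against an arbitrary feasible point, which is a valid and slightly cleaner simplification.
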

\begin{proof}
Let $\sigma \in \R^2_{++}$ be arbitrary. First, we prove item~(a). Let us define $\alpha \coloneqq \inf \{ \phi(x,s) \colon (x, s) \in {\cal V} \}$, where $\phi(x, s) \coloneqq \frac{1}{2} s^{\top} M_{\sigma} s$ and ${\cal V} \coloneqq \{ (x, s) \in \R^{n} \times \R^{m} \colon x \in \dom f, \, s=b-Ax \}$. Since $\bar{s} \in {\cal S}$ holds, there exists $\bar{x} \in \R^{n}$ such that
\begin{align} \label{eq:bars_barx}
\bar{s} = b - A \bar{x}, \quad \bar{x} \in \dom f.   
\end{align}
Let $\varepsilon > 0$ be arbitrary. From the definition of $\alpha$, there exists $(x_{\varepsilon}, s_{\varepsilon}) \in \R^{n} \times \R^{m}$ such that
\begin{align} \label{def:gamma}
\phi(x_{\varepsilon}, s_{\varepsilon}) < \alpha + \varepsilon, \quad s_{\varepsilon} = b - A x_{\varepsilon}, \quad x_{\varepsilon} \in \dom f.
\end{align}
Noting~\eqref{def:gamma} derives $s_{\varepsilon} \in {\cal S}$, namely, $s_{\varepsilon}$ is a feasible point of~\eqref{opt:Rsigma2}. Thus, we have 
\begin{align} \label{ineq:psi_barxbars}
\phi(\bar{x}, \bar{s}) = \frac{1}{2} \bar{s}^{\top} M_{\sigma} \bar{s} \leq \frac{1}{2} s_{\varepsilon}^{\top} M_{\sigma} s_{\varepsilon} = \phi(x_{\varepsilon}, s_{\varepsilon}).
\end{align}
Combining~\eqref{def:gamma} and~\eqref{ineq:psi_barxbars} yields $\phi(\bar{x}, \bar{s}) < \alpha + \varepsilon$, that is, $\phi(\bar{x}, \bar{s}) \leq \alpha$.
Meanwhile, since $(\bar{x}, \bar{s}) \in {\cal V}$ holds from~\eqref{eq:bars_barx}, we readily have $\alpha \leq \phi(\bar{x}, \bar{s})$. These facts and the definition of $\alpha$ mean that $(\bar{x}, \bar{s})$ is an optimizer of $\min \{ \phi(x, s) \colon (x,s) \in {\cal V} \}$. Since~\eqref{p_sigma} is equal to $\min \{ \phi(x, s) \colon (x,s) \in {\cal V} \}$, item~(a) is proven. 
\par
Next, we show item~(b). Let $\beta \coloneqq \inf \{ \psi(s) \colon s \in {\cal S} \}$, where $\psi(s) \coloneqq \frac{1}{2} s^{\top} M_{\sigma} s$. We take an arbitrary number $\delta > 0$. The definition of $\beta$ ensures the existence of $s_{\delta} \in \R^{m}$ satisfying
\begin{align} \label{ineq:s_delta}
\psi(s_{\delta}) < \beta + \delta, \quad s_{\delta} \in {\cal S}.
\end{align}
Moreover, from $s_{\delta} \in {\cal S}$, there exists $x_{\delta} \in \R^{n}$ such that $x_{\delta} \in \dom f$ and $s_{\delta} = b - A x_{\delta}$, that is, $(x_{\delta}, s_{\delta}) \in {\cal V}$. Since $(x^{\ast}, s_1^{\ast}, s_2^{\ast})$ is an optimal solution of~\eqref{p_sigma}, this fact and~\eqref{ineq:s_delta} imply
\begin{align} \label{ineq:psi_beta}
\psi(s^{\ast}) \leq \psi(s_{\delta}) < \beta + \delta, \quad {\rm i.e.,} \quad \psi(s^{\ast}) \leq \beta
\end{align}
Meanwhile, note that $s^{\ast} \in {\cal S}$ because $x^{\ast} \in \dom f$ holds, namely, $s^{\ast}$ is a feasible point of~\eqref{opt:Rsigma2}. Hence, we obtain $\beta \leq \psi(s^{\ast})$. It then follows from~\eqref{ineq:psi_beta} and the definition of $\beta$ that $\inf \{ \psi(s) \colon s \in {\cal S} \} = \psi(s^{\ast})$. These results derive that $s^{\ast} \coloneqq (s_{1}^{\ast}, s_{2}^{\ast})$ solves~\eqref{opt:Rsigma2}.
\end{proof}

\begin{proposition} \label{pro:solvability_Rsigma}
If Assumption~{\rm \ref{ass:Adomf_closed}} holds, problem~\eqref{opt:Rsigma2} has a unique optimal solution for any $\sigma \in \R_{++}^2$.
\end{proposition}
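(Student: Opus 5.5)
The proof rests on three facts: ${\cal S}$ is nonempty, closed and convex; the objective $\frac{1}{2} s^{\top} M_{\sigma} s$ is strongly convex and coercive; and a strongly convex function attains its minimum over a nonempty closed convex set at exactly one point. The plan is to verify the properties of ${\cal S}$ first, then obtain existence by a compactness argument and uniqueness by strict convexity.

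\textbf{Properties of ${\cal S}$.} Write ${\cal S} = b - A(\dom f)$, which is the image of $A(\dom f)$ under the affine map $y \mapsto b - y$.
\begin{itemize}
\item ${\cal S}$ is nonempty because $f$ is proper, so $\dom f \neq \emptyset$.
\item ${\cal S}$ is convex because $\dom f$ is convex, $f$ being convex, and linear images of convex sets are convex.
\item ${\cal S}$ is closed because $A(\dom f)$ is closed by Assumption~\ref{ass:Adomf_closed}, and $y \mapsto b - y$ is a homeomorphism of $\R^m$.
\end{itemize}

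\textbf{Existence.} Fix $\sigma \in \R^2_{++}$ and let $\sigma_{\min} \coloneqq \min\{\sigma_1,\sigma_2\} > 0$. Then $M_{\sigma} \succeq \sigma_{\min} I$, so $\psi(s) \coloneqq \frac{1}{2} s^{\top} M_{\sigma} s \geq \frac{\sigma_{\min}}{2}\Vert s\Vert^2$. Pick any $\hat{s} \in {\cal S}$ and consider the sublevel set $L \coloneqq \{ s \in {\cal S} \colon \psi(s) \leq \psi(\hat{s}) \}$.
\begin{itemize}
\item $L$ is bounded, since every $s \in L$ satisfies $\Vert s \Vert^2 \leq 2\psi(\hat{s})/\sigma_{\min}$.
\item $L$ is closed, being the intersection of the closed set ${\cal S}$ with a closed sublevel set of the continuous function $\psi$.
\item $L$ is nonempty, since $\hat{s} \in L$.
\end{itemize}
Hence $L$ is compact, and by Weierstrass $\psi$ attains its minimum over $L$. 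Every point of ${\cal S} \setminus L$ has a larger value than $\psi(\hat{s})$, so this minimizer is also a minimizer over ${\cal S}$.

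\textbf{Uniqueness.} Suppose $s \neq s'$ both solve~\eqref{opt:Rsigma2}. The midpoint $\frac{1}{2}(s+s')$ lies in ${\cal S}$ by convexity. Since $M_{\sigma} \succ 0$, $\psi$ is strictly convex, which gives
\begin{align*}
\psi\left(\tfrac{1}{2}(s+s')\right) < \tfrac{1}{2}\psi(s) + \tfrac{1}{2}\psi(s') = \min_{{\cal S}} \psi.
\end{align*}
This is a contradiction, so the optimal solution is unique.

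The only step that is not routine is the closedness of ${\cal S}$. Linear images of closed sets need not be closed, and the paper deliberately does not assume $\dom f$ is closed. Assumption~\ref{ass:Adomf_closed} is used exactly at this point, and no further work is needed there.
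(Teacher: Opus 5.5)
Your proof is correct and follows essentially the same route as the paper: Assumption~\ref{ass:Adomf_closed} makes ${\cal S} = b - A(\dom f)$ closed, $M_{\sigma} \succ 0$ supplies the boundedness needed for existence, and strict convexity gives uniqueness. The paper phrases existence through a minimizing sequence, with a decomposition of $x^{(j)}$ along ${\rm Ker}(A)^{\perp}$ that is not actually needed, since boundedness of $\{s^{(j)}\}$ already yields a convergent subsequence; your Weierstrass argument on a compact sublevel set is the same idea in a cleaner form, and you also make explicit the convexity of ${\cal S}$ that the uniqueness claim relies on.
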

\begin{proof}
Let $\sigma \in \R^2_{++}$ be arbitrary. From item~(a) of Lemma~\ref{lem:equivalent}, it is sufficient to show that problem~\eqref{opt:Rsigma2} has at least one optimizer. From the definition of $\inf \{ \frac{1}{2} s^{\top} M_{\sigma} s \colon s \in {\cal S} \}$, there exists $\{ s^{(j)} \} \subset {\cal S}$ such that
\begin{align}
\inf \left\{ \frac{1}{2} s^{\top} M_{\sigma} s \colon s \in {\cal S} \right\} = \lim_{j \to \infty} \frac{1}{2} (s^{(j)})^{\top} M_{\sigma} s^{(j)}. \label{lim:sj_gamma}
\end{align}
Since $\{ s^{(j)} \} \subset {\cal S}$ holds, %there exists $\{ x^{(j)} \} \subset \dom f$ such that 
\begin{align} \label{eq:sj_bAxj}
\exists \{ x^{(j)} \} \subset \dom f, \quad s^{(j)} = b - A x^{(j)} \quad  \forall j \in \mathbb{N}.   
\end{align}
Moreover, let $\{ p^{(j)} \} \subset \R^{n}$ and $\{ q^{(j)} \} \subset \R^{n}$ be sequences satisfying
\begin{align} \label{xj_decomposition}
x^{(j)} = p^{(j)} + q^{(j)},\quad p^{(j)} \in {\rm Ker}(A)^{\perp}, \quad q^{(j)} \in {\rm Ker}(A) \quad \forall j \in \mathbb{N}.   
\end{align}
Hence, we have
\begin{align} \label{sj_decomposition}
A p^{(j)} = A(p^{(j)} + q^{(j)}) = A x^{(j)} = b - s^{(j)} \quad \forall j \in \mathbb{N}.
\end{align}
Now, let us define $\varphi \colon {\rm Ker}(A)^{\perp} \to \R^{m}$ as $\varphi(v) \coloneqq A v$. We can easily see that $\varphi$ is injective on ${\rm Ker}(A)^{\perp}$, that is, there exists $c > 0$ such that $c \Vert v \Vert \leq \Vert \varphi(v) \Vert$ for all $v \in {\rm Ker}(A)^{\perp}$. It then follows from $\{ p^{(j)} \} \subset {\rm Ker}(A)^{\perp}$ and~\eqref{sj_decomposition} that
\begin{align} \label{phi:injection}
\Vert p^{(j)} \Vert \leq \frac{1}{c} \Vert \varphi(p^{(j)}) \Vert = \frac{1}{c} \Vert A p^{(j)} \Vert \leq \frac{1}{c} \left( \Vert b \Vert + \Vert s^{(j)} \Vert \right) \quad \forall j \in \mathbb{N}.
\end{align}
Since $\{ s^{(j)} \}$ is bounded from~\eqref{lim:sj_gamma} and the positive definiteness of $M_{\sigma}$, the boundedness of $\{ p^{(j)} \}$ is also ensured by~\eqref{phi:injection}. From this fact, without loss of generality, we can assume that $p^{(j)} \to \bar{p}$ as $j \to \infty$, where $\bar{p} \in \R^{n}$ is some vector. Using~\eqref{xj_decomposition} yields 
\begin{align} \label{lim:vjvbar}
v^{(j)} \coloneqq Ax^{(j)} = Ap^{(j)} \longrightarrow A\bar{p} \eqqcolon \bar{v} \quad (j \to \infty).    
\end{align}
Now, recall that $\{ x^{(j)} \} \subset \dom f$ is verified by~\eqref{eq:sj_bAxj}. This fact and~\eqref{lim:vjvbar} imply that $\{ v^{(j)} \} \subset A(\dom f)$ converges to $\bar{v}$. It then follows from the closedness of $A(\dom f)$ that $\bar{v} \in A(\dom f)$, that is, there exists $\bar{x} \in \dom f$ such that $\bar{v} = A \bar{x}$. Now, we define $\bar{s} \coloneqq b - \bar{v}$. Notice that $\bar{s} \in {\cal S}$ is satisfied because $\bar{x} \in \dom f$ and $\bar{s} = b - A \bar{x}$. Moreover, combining~\eqref{eq:sj_bAxj} and~\eqref{lim:vjvbar} derives $s^{(j)} = b - A x^{(j)} = b - v^{(j)} \to b - \bar{v} = \bar{s}$ as $j \to \infty$. These results and~\eqref{lim:sj_gamma} lead to
\begin{align*}
\inf \left\{ \frac{1}{2} s^{\top} M_{\sigma} s \colon s \in {\cal S} \right\} = \lim_{j \to \infty} \frac{1}{2} (s^{(j)})^{\top} M_{\sigma} s^{(j)} = \frac{1}{2} \bar{s}^{\top} M_{\sigma} \bar{s}, \quad \bar{s} \in {\cal S}.
\end{align*}
Therefore, we conclude that $\bar{s}$ solves~\eqref{opt:Rsigma2}. The uniqueness of $\bar{s}$ is derived from the strong convexity of the objective function.
\end{proof}

\noindent
Combining Lemma~\ref{lem:equivalent} and Proposition~\ref{pro:solvability_Rsigma} ensures that problem~\eqref{p_sigma} has at least one optimal solution. For any $\sigma = (\sigma_1, \sigma_2) \in \R_{++}^2$, let $(\bar{x}(\sigma), \bar{s}_1(\sigma), \bar{s}_2(\sigma))$ be an optimal solution of problem~\eqref{p_sigma}. For simplicity, we define $\bar{s}(\sigma) \coloneqq (\bar{s}_1(\sigma), \bar{s}_2(\sigma))$. In the following, we call $\bar{s}(\sigma)$ an approximate hierarchically optimal shift. 
% \par
% Since Proposition~\ref{pro:solvability_Rsigma} ensures that problem~\eqref{opt:Rsigma2} has an unique optimal solution, it is denoted by $\bar{s}(\sigma) \coloneqq (\bar{s}_1(\sigma), \bar{s}_2(\sigma)) \in \R^{m_1} \times \R^{m_2}$. Then, by item~(a) of Lemma~\ref{lem:equivalent}, there exists $\bar{x}(\sigma) \in \R^{n}$ such that $(\bar{x}(\sigma), \bar{s}_1(\sigma), \bar{s}_2(\sigma))$ be an optimal solution of problem~\eqref{p_sigma}. Thus, the solvability of~\eqref{p_sigma} is verified. In the following, we call $\bar{s}(\sigma)$ an approximate hierarchically optimal shift. 

\begin{remark}
In the proposed method, at each iteration $k \in \mathbb{N} \cup \{0\}$, we consider computing an approximate hierarchically optimal shift $\bar{s}(\sigma^{(k)}) = (\bar{s}_1(\sigma^{(k)}), \bar{s}_2(\sigma^{(k)}))$ using suitably chosen parameters $\sigma^{(k)} = (\sigma_1^{(k)}, \sigma_2^{(k)})$. Since the approximate hierarchically optimal shift $\bar{s}(\sigma^{(k)})$ is obtained from the infeasibility control problem, it is expected that, by appropriately selecting the sequences $\{ \sigma_1^{(k)} \}$ and $\{ \sigma_2^{(k)} \}$, the sequence $\{ \bar{s}(\sigma^{(k)}) \}$ converges to $s^{\ast}$. This framework enables automatic handling of prioritized constraint violations, and it is referred to as the \emph{infeasibility control framework}. In what follows, we discuss the convergence properties of the sequence $\{ \bar{s}(\sigma^{(k)}) \}$ under the following assumption.    
\end{remark}

\begin{remark}
In principle, one may solve the shifted problem~\eqref{p1_linear_slack_opt} directly if the hierarchically optimal shift $s^{\ast}$ were known in advance. In some cases, the shift $s^{\ast}$ can be obtained by solving problems~\eqref{eq:true_solution} and~\eqref{eq:true_solution2}. In contrast, the infeasibility control framework does not rely on such an exact preprocessing step and instead generates, simultaneously with solving the problem, a sequence of approximate shifts $\{ \bar{s}(\sigma^{(k)})\}$ that converges to $s^{\ast}$.   
\end{remark}

\begin{assumption}\label{asm:sigma_frac_zero}
The sequences $\{ \sigma_{1}^{(k)} \}$ and $\{ \sigma_{2}^{(k)} \}$ satisfy 
\begin{align*}
\lim_{k \to \infty} \frac{\sigma_1^{(k)}}{\sigma_2^{(k)}} = \infty, \qquad \frac{\sigma_{1}^{(k-1)}}{\sigma_{2}^{(k-1)}} \leq \frac{\sigma_{1}^{(k)}}{\sigma_{2}^{(k)}} \quad \forall k \in \mathbb{N}.
\end{align*}
\end{assumption}

\noindent
For simplicity, we denote $\eta^{(k)} \coloneqq \sigma_1^{(k)}/\sigma_2^{(k)}$ for each $k \in \mathbb{N} \cup \{ 0 \}$, and let $g \colon \R^{m_1} \times \R^{m_2} \to \R$ and $h \colon \R^{m_1} \times \R^{m_2} \to \R$ be defined as $g(s_1, s_2) \coloneqq \frac{1}{2} \Vert s_2 \Vert^{2}$ and $h(s_1, s_2) \coloneqq \frac{1}{2} \Vert s_1 \Vert^{2}$, respectively. Moreover, we define the sets ${\cal A}$ and ${\cal B}$ as follows:
\begin{align*}
{\cal A} \coloneqq \argmin \left\{ g(s_1, s_2) \colon (s_1, s_2) \in {\cal B} \right\}, \quad {\cal B} \coloneqq \argmin \left\{ h(s_1, s_2) \colon (s_1, s_2) \in {\cal S} \right\}.
\end{align*}

\noindent
From now on, we show the convergence of the approximate hierarchically optimal shift $\bar{s}(\sigma^{(k)})$ under Assumption~\ref{asm:sigma_frac_zero}.

\begin{lemma} \label{lem:caA_calB}
Suppose that Assumption~{\rm \ref{twoprob_solvable}} holds. Then, the sets ${\cal A}$ and ${\cal B}$ satisfy ${\cal A} = \{ (s_{1}^{\ast}, s_{2}^{\ast}) \}$ and $(s_{1}^{\ast}, s_{2}^{\ast}) \in {\cal B}$, respectively.
\end{lemma}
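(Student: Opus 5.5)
The plan is to describe ${\cal B}$ and ${\cal A}$ explicitly in terms of the sets ${\cal S}_1$, ${\cal S}_2$ and ${\cal X}_1$ of Lemma~\ref{lem:true_sol_unique}, and then to read off both claims from that lemma.

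First I would show that
\begin{align*}
{\cal B} = \{ (s_1, s_2) \in {\cal S} \colon s_1 = s_1^{\ast} \}.
\end{align*}
The projection of ${\cal S}$ onto its first component is exactly ${\cal S}_1$. Since $h$ depends only on $s_1$, we get $\inf\{ h(s) \colon s \in {\cal S} \} = \inf\{ \frac{1}{2}\Vert s_1 \Vert^2 \colon s_1 \in {\cal S}_1 \}$. By Lemma~\ref{lem:true_sol_unique}, this value is attained only at $s_1^{\ast}$. Hence a point $(s_1, s_2) \in {\cal S}$ lies in ${\cal B}$ if and only if $s_1 = s_1^{\ast}$.

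Next I would check that $s^{\ast} \in {\cal B}$. The point $x^{\ddag}$ from~\eqref{eq:true_solution2} belongs to ${\cal X}_1 \subset \dom f$. Therefore $(b_1 - A_1 x^{\ddag}, b_2 - A_2 x^{\ddag}) = (s_1^{\ast}, s_2^{\ast}) \in {\cal S}$, and its first component equals $s_1^{\ast}$. By the characterization above, this gives the second claim.

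For ${\cal A}$, I would use the definition of ${\cal X}_1$ to show that the second components of points of ${\cal B}$ form exactly ${\cal S}_2$. Indeed, $(s_1^{\ast}, s_2) \in {\cal S}$ holds if and only if some $x \in \dom f$ satisfies $b_1 - A_1 x = s_1^{\ast}$ and $s_2 = b_2 - A_2 x$. That is, $x \in {\cal X}_1$ and $s_2 = b_2 - A_2 x$, which means $s_2 \in {\cal S}_2$. So
\begin{align*}
{\cal B} = \{ s_1^{\ast} \} \times {\cal S}_2 .
\end{align*}
Minimizing $g$ over ${\cal B}$ is therefore the same as minimizing $\frac{1}{2}\Vert s_2 \Vert^2$ over ${\cal S}_2$. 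By Lemma~\ref{lem:true_sol_unique}, this problem has the unique solution $s_2^{\ast}$, which yields ${\cal A} = \{ (s_1^{\ast}, s_2^{\ast}) \}$.

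I do not expect a real obstacle. The only delicate point is that $\dom f$ need not be closed, so attainment cannot come from a compactness argument. It is instead supplied by Lemma~\ref{lem:true_sol_unique} (via Assumption~\ref{twoprob_solvable}), which gives both existence and uniqueness. The step that needs the most care is the set identity ${\cal B} = \{ s_1^{\ast} \} \times {\cal S}_2$, and both of its inclusions follow directly from unwinding the definitions.
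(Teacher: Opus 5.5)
Your argument is correct. It uses the same single ingredient as the paper: the existence-and-uniqueness statement of Lemma~\ref{lem:true_sol_unique} for minimizing $\frac{1}{2}\Vert s_1\Vert^2$ over ${\cal S}_1$ and $\frac{1}{2}\Vert s_2\Vert^2$ over ${\cal S}_2$. The difference is in how the argument is organized.

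The paper works pointwise. It takes an arbitrary $(\bar{s}_1,\bar{s}_2)\in{\cal A}$ and compares its $h$- and $g$-values with those of $(s_1^{\ast},s_2^{\ast})$. This forces first $\bar{s}_1=s_1^{\ast}$ and then $\bar{s}_2=s_2^{\ast}$. Afterwards it checks separately that $(s_1^{\ast},s_2^{\ast})\in{\cal A}$, by showing that every point of ${\cal B}$ has first component $s_1^{\ast}$ and second component in ${\cal S}_2$. Your identity ${\cal B}=\{s_1^{\ast}\}\times{\cal S}_2$ captures both inclusions at once. It turns the lexicographic problem defining ${\cal A}$ literally into the second problem of Lemma~\ref{lem:true_sol_unique}, so the conclusion is immediate and the hierarchical structure is transparent.

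Your route also has a small logical advantage: you obtain $(s_1^{\ast},s_2^{\ast})\in{\cal B}$ directly. The paper derives this membership inside the argument that starts from an element of ${\cal A}$, since it uses $\min\{h(s)\colon s\in{\cal S}\}=h(\bar{s}_1,\bar{s}_2)$. As written, it therefore presupposes ${\cal A}\neq\emptyset$, which is only established in its second part using that very membership.

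This dependence is easily removed by exactly your observation. For every $(s_1,s_2)\in{\cal S}$ we have $s_1\in{\cal S}_1$, hence $h(s_1^{\ast},s_2^{\ast})=\frac{1}{2}\Vert s_1^{\ast}\Vert^2\le\frac{1}{2}\Vert s_1\Vert^2$. The paper's pointwise comparison of $h$- and $g$-values is the same style of argument reused in the proof of Theorem~\ref{thm:sbar_sast}, but here it buys no extra generality over your set-level characterization.
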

\begin{proof}
To begin with, Lemma~\ref{lem:true_sol_unique} ensures that
\begin{align}\label{eq:s1ast_s2ast}
s_{1}^{\ast} = \argmin \left\{ \frac{1}{2} \Vert s_{1} \Vert^{2} \colon s_{1} \in {\cal S}_{1} \right\}, \quad s_{2}^{\ast} = \argmin \left\{ \frac{1}{2} \Vert s_{2} \Vert^{2} \colon s_{2} \in {\cal S}_{2} \right\}, 
\end{align}
where we recall that ${\cal S}_{1} = \{ s_1 \in \R^{m_1} \colon \exists x \in \dom f, \, s_1 = b_1 - A_1 x \}$ and ${\cal S}_{2} = \{ s_2 \in \R^{m_2} \colon \exists x \in \dom f, \, s_{1}^{\ast} = b_1 - A_1 x, \, s_2 = b_2 - A_2 x \}$.
\par
First, we show that ${\cal A} \subset \{ (s_1^{\ast}, s_2^{\ast}) \}$ and $(s_{1}^{\ast}, s_{2}^{\ast}) \in {\cal B}$. Let $(\bar{s}_{1}, \bar{s}_{2}) \in {\cal A}$ be arbitrary. From the definitions of ${\cal A}$ and ${\cal B}$, it is clear that 
\begin{align} \label{s1s2inBS}
(\bar{s}_{1}, \bar{s}_{2}) \in {\cal B} \cap {\cal S}.    
\end{align}
Now, it follows from $s_{2}^{\ast} \in {\cal S}_{2}$ that 
\begin{align} \label{sast_in_S}
\exists x^{\ast} \in \dom f, \quad s_{1}^{\ast} = b_1 - A_1 x^{\ast}, \quad s_{2}^{\ast} = b_2 - A_2 x^{\ast}, \quad {\rm i.e.,} \quad (s_{1}^{\ast}, s_{2}^{\ast}) \in {\cal S}.
\end{align}
Since $(\bar{s}_{1}, \bar{s}_{2}) \in {\cal B}$ holds from~\eqref{s1s2inBS}, we obtain
\begin{align} \label{s1barleqs1ast}
\frac{1}{2} \Vert \bar{s}_{1} \Vert^2 = h(\bar{s}_1, \bar{s}_2) \leq h(s_1^{\ast}, s_2^{\ast}) = \frac{1}{2} \Vert s_{1}^{\ast} \Vert^2.
\end{align}
Meanwhile, because~\eqref{s1s2inBS} guarantees $(\bar{s}_{1}, \bar{s}_{2}) \in {\cal S}$,
\begin{align} \label{s1s2_in_calS}
\exists \bar{x} \in \dom f, \quad \bar{s}_1 = b_1 - A_1 \bar{x}, \quad \bar{s}_2 = b_2 - A_2 \bar{x},
\end{align}
which implies that $\bar{s}_{1} \in {\cal S}_{1}$. Thus, exploiting the first equality of~\eqref{eq:s1ast_s2ast} yields $\frac{1}{2} \Vert s_{1}^{\ast} \Vert^{2} \leq \frac{1}{2} \Vert \bar{s}_{1} \Vert^{2}$. It then follows from~\eqref{s1barleqs1ast} that $\min \{ \frac{1}{2} \Vert s_{1} \Vert^2 \colon s_{1} \in {\cal S}_{1} \} = \frac{1}{2} \Vert s_{1}^{\ast} \Vert^{2} = \frac{1}{2} \Vert \bar{s}_{1} \Vert^{2}$. Then, again using~\eqref{eq:s1ast_s2ast} yields 
\begin{align} \label{eq:s1bar_s1ast}
\bar{s}_{1} \in \argmin \left\{ \frac{1}{2} \Vert s_{1} \Vert^{2} \colon s_{1} \in {\cal S}_{1} \right\}, \quad {\rm i.e.,} \quad \bar{s}_{1} = s_{1}^{\ast}.   
\end{align}
This equation and~\eqref{s1barleqs1ast} lead to $h(\bar{s}_{1}, \bar{s}_{2}) = h(s_1^{\ast}, s_2^{\ast})$. Moreover, noting~\eqref{s1s2inBS} and the definition of ${\cal B}$ ensures $\min \{ h(s_{1}, s_{2}) \colon (s_{1}, s_{2}) \in {\cal S} \} = h(\bar{s}_{1}, \bar{s}_{2}) = h(s_{1}^{\ast}, s_{2}^{\ast})$. Hence, we have by~\eqref{sast_in_S} that $(s_1^{\ast}, s_2^{\ast}) \in {\cal B}$. Using $(\bar{s}_{1}, \bar{s}_{2}) \in {\cal A}$ and the definition of ${\cal A}$ imply
\begin{align} \label{s2barleqs2ast}
\frac{1}{2} \Vert \bar{s}_{2} \Vert^2 = g(\bar{s}_1, \bar{s}_2) \leq g(s_1^{\ast}, s_2^{\ast}) = \frac{1}{2} \Vert s_{2}^{\ast} \Vert^2.
\end{align}
Now, combining~\eqref{s1s2_in_calS} and~\eqref{eq:s1bar_s1ast} derives $\bar{s}_{2} \in {\cal S}_{2}$. It then follows from the second equality of~\eqref{eq:s1ast_s2ast} that $\frac{1}{2} \Vert s_{2}^{\ast} \Vert^2 \leq \frac{1}{2} \Vert \bar{s}_{2} \Vert^2$. This inequality and~\eqref{s2barleqs2ast} leads to $\min \{ \frac{1}{2} \Vert s_{2} \Vert^2 \colon s_{2} \in {\cal S}_{2} \} = \frac{1}{2} \Vert s_{2}^{\ast} \Vert^{2} = \frac{1}{2} \Vert \bar{s}_{2} \Vert^{2}$. Hence, by noting the second equality of~\eqref{eq:s1ast_s2ast}, we obtain
\begin{align} \label{eq:s2bar_s2ast}
\bar{s}_{2} \in \argmin \left\{ \frac{1}{2} \Vert s_{2} \Vert^{2} \colon s_{2} \in {\cal S}_{2} \right\}, \quad {\rm i.e.,} \quad \bar{s}_{2} = s_{2}^{\ast}.   
\end{align}
Therefore, according to~\eqref{eq:s1bar_s1ast} and~\eqref{eq:s2bar_s2ast}, we have the desired results.
\par
Second, we will verify that $(s_1^{\ast}, s_2^{\ast}) \in {\cal A}$. Let $(\widehat{s}_1, \widehat{s}_2) \in {\cal B}$ be arbitrary. Since $(s_1^{\ast}, s_2^{\ast}) \in {\cal B}$ is shown in the first part, we obtain $\frac{1}{2} \| \widehat{s}_1 \|^2 = h(\widehat{s}_1, \widehat{s}_2) = h(s_1^{\ast}, s_2^{\ast}) = \frac{1}{2} \| s_1^{\ast} \|^2$. Meanwhile, it is clear that $(\widehat{s}_1, \widehat{s}_2) \in {\cal S}$, that is, $\widehat{s}_1 \in {\cal S}_1$. It then follows from the first equality of~\eqref{eq:s1ast_s2ast} that $\widehat{s}_1 = s_1^{\ast}$. Moreover, again noting $(\widehat{s}_1, \widehat{s}_2) \in {\cal S}$ implies $\widehat{s}_2 \in {\cal S}_2$. Thus, we have from the second equality of~\eqref{eq:s1ast_s2ast} that $g(s_1^{\ast}, s_2^{\ast}) = \frac{1}{2} \| s_2^{\ast} \|^2 \leq \frac{1}{2} \| \widehat{s}_2 \|^2 = g(\widehat{s}_1, \widehat{s}_2)$. This inequality means $(s_1^{\ast}, s_2^{\ast}) \in {\cal A}$.
\end{proof}

\begin{lemma}\label{lem:s_converge}
If Assumptions~{\rm \ref{twoprob_solvable}},~{\rm \ref{ass:Adomf_closed}}, and~{\rm \ref{asm:sigma_frac_zero}} are satisfied, then $\{ \Vert \bar{s}_{1} (\sigma^{(k)}) \Vert \}$ and $\{ \Vert \bar{s}_{2} (\sigma^{(k)}) \Vert \}$ are convergent sequences, respectively.
\end{lemma}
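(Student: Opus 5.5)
We need to show that $\|\bar s_1(\sigma^{(k)})\|$ and $\|\bar s_2(\sigma^{(k)})\|$ converge. The plan is a standard monotonicity argument for weighted penalties. By Proposition~\ref{pro:solvability_Rsigma}, $\bar s(\sigma)$ is the unique minimizer of $\frac{\sigma_1}{2}\|s_1\|^2+\frac{\sigma_2}{2}\|s_2\|^2$ over ${\cal S}$. Dividing by $\sigma_2>0$, it is therefore the unique minimizer of $\eta h(s)+g(s)$ over ${\cal S}$ with $\eta=\sigma_1/\sigma_2$. Write $\bar s^{(k)} := \bar s(\sigma^{(k)})$, $h_k := h(\bar s^{(k)})$, $g_k := g(\bar s^{(k)})$, and recall $\eta^{(k-1)}\le \eta^{(k)}$ from Assumption~\ref{asm:sigma_frac_zero}.

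\textbf{Step 1 (monotonicity).} Since both $\bar s^{(k-1)}$ and $\bar s^{(k)}$ lie in ${\cal S}$, optimality gives two inequalities:
\begin{align*}
\eta^{(k)} h_k + g_k &\le \eta^{(k)} h_{k-1} + g_{k-1},\\
\eta^{(k-1)} h_{k-1} + g_{k-1} &\le \eta^{(k-1)} h_k + g_k .
\end{align*}
Adding them yields $(\eta^{(k)}-\eta^{(k-1)})(h_k-h_{k-1})\le 0$.
\begin{itemize}
\item If $\eta^{(k)}>\eta^{(k-1)}$, this gives $h_k\le h_{k-1}$. The second inequality then gives $g_{k-1}-g_k\le \eta^{(k-1)}(h_k-h_{k-1})\le 0$, so $g_k\ge g_{k-1}$.
\item If $\eta^{(k)}=\eta^{(k-1)}$, the two problems coincide. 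Uniqueness of the minimizer gives $\bar s^{(k)}=\bar s^{(k-1)}$, hence equality in both quantities.
\end{itemize}
So in all cases $\{h_k\}$ is nonincreasing and $\{g_k\}$ is nondecreasing.

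\textbf{Step 2 (bounds).} $\{h_k\}$ is bounded below by $0$, hence convergent. For $\{g_k\}$ we need an upper bound. Compare with $s^\ast\in{\cal S}$, which holds by Lemma~\ref{lem:caA_calB} since $s^\ast\in{\cal B}\subset{\cal S}$:
\[
\eta^{(k)} h_k+g_k\le \eta^{(k)} h(s^\ast)+g(s^\ast).
\]
Because $s^\ast\in{\cal B}$, we have $h(s^\ast)=\min_{{\cal S}}h\le h_k$, so $\eta^{(k)}(h(s^\ast)-h_k)\le 0$. Hence $g_k\le g(s^\ast)=\frac12\|s_2^\ast\|^2$. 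A nondecreasing sequence bounded above converges.

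Since $\|\bar s_1^{(k)}\|=\sqrt{2h_k}$ and $\|\bar s_2^{(k)}\|=\sqrt{2g_k}$, both norm sequences converge.

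The only delicate point is the equality case $\eta^{(k)}=\eta^{(k-1)}$, which is why uniqueness from Proposition~\ref{pro:solvability_Rsigma} is invoked. Assumption~\ref{ass:Adomf_closed} enters only through that existence and uniqueness, and Assumption~\ref{twoprob_solvable} enters through Lemma~\ref{lem:caA_calB} to supply the upper bound on $g_k$.
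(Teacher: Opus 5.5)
Your proof is correct and follows the paper's argument essentially step for step. You add the two cross-optimality inequalities to get $(\eta^{(k)}-\eta^{(k-1)})(h_k-h_{k-1})\le 0$, use uniqueness from Proposition~\ref{pro:solvability_Rsigma} (with Lemma~\ref{lem:equivalent}(b) identifying $\bar s(\sigma)$ as the minimizer over ${\cal S}$) for the case $\eta^{(k)}=\eta^{(k-1)}$, and obtain the bound $g_k\le \frac12\|s_2^\ast\|^2$ by comparing with $s^\ast\in{\cal B}$ via Lemma~\ref{lem:caA_calB}. The only, cosmetic, difference is that you read off the monotonicity of $g_k$ directly from the second inequality, whereas the paper divides the two inequalities by $\eta^{(k)}$ and $\eta^{(k-1)}$ and adds them again.
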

\begin{proof}
First, we show that $\{ \| \bar{s}_{1} (\sigma^{(k)}) \| \}$ is a monotonically nonincreasing sequence and that $\{ \| \bar{s}_{2} (\sigma^{(k)}) \| \}$ is a monotonically nondecreasing sequence. Let $k \in \mathbb{N}$. Since $(\bar{x}(\sigma^{(k-1)}), \bar{s}_1(\sigma^{(k-1)}), \bar{s}_2(\sigma^{(k-1)}))$ is an optimal solution of problem (R$_{\sigma^{(k-1)}}$), it satisfies $\bar{x}(\sigma^{(k-1)}) \in \dom f$, $\bar{s}_2(\sigma^{(k-1)}) = b_2 - A_2 \bar{x}(\sigma^{(k-1)})$, and $\bar{s}_1(\sigma^{(k-1)}) = b_1 - A_1 \bar{x}(\sigma^{(k-1)})$. Hence, it is feasible for problem (R$_{\sigma^{(k)}}$), and therefore,
\begin{align*}
\frac{\sigma_1^{(k)}}{2} \|\bar{s}_1(\sigma^{(k)}) \|^2 + \frac{\sigma_2^{(k)}}{2} \|\bar{s}_2(\sigma^{(k)}) \|^2 
\leq 
\frac{\sigma_1^{(k)}}{2} \|\bar{s}_1(\sigma^{(k-1)}) \|^2 + \frac{\sigma_2^{(k)}}{2} \|\bar{s}_2(\sigma^{(k-1)}) \|^2.
\end{align*}
Multiplying both sides by $2/\sigma_{2}^{(k)}$ derives
\begin{align}\label{eq:s_bar_monotone3}
\eta^{(k)} \|\bar{s}_1(\sigma^{(k)}) \|^2 + \|\bar{s}_2(\sigma^{(k)}) \|^2 
\leq 
\eta^{(k)} \|\bar{s}_1(\sigma^{(k-1)}) \|^2 + \|\bar{s}_2(\sigma^{(k-1)}) \|^2.
\end{align}
Similarly, for the optimal solution $(\bar{x}(\sigma^{(k)}), \bar{s}_1(\sigma^{(k)}), \bar{s}_2(\sigma^{(k)}))$ of problem (R$_{\sigma^{(k)}}$), we obtain
\begin{align}\label{eq:s_bar_monotone4}
\eta^{(k-1)} \|\bar{s}_1(\sigma^{(k-1)}) \|^2 + \|\bar{s}_2(\sigma^{(k-1)}) \|^2 
\leq 
\eta^{(k-1)} \|\bar{s}_1(\sigma^{(k)}) \|^2 + \|\bar{s}_2(\sigma^{(k)}) \|^2.
\end{align}
Adding \eqref{eq:s_bar_monotone3} and \eqref{eq:s_bar_monotone4} implies
\begin{align*}
\left( \eta^{(k)} - \eta^{(k-1)} \right) 
\left( \|\bar{s}_1(\sigma^{(k)})\|^2 - \|\bar{s}_1(\sigma^{(k-1)})\|^2 \right) \leq 0.
\end{align*}
By Assumption~\ref{asm:sigma_frac_zero}, we have $\eta^{(k)} - \eta^{(k-1)} \geq 0$. If $\eta^{(k)} - \eta^{(k-1)} > 0$, then $\|\bar{s}_1(\sigma^{(k)})\| \leq \|\bar{s}_1(\sigma^{(k-1)})\|$ clearly holds. Meanwhile, if $\eta^{(k)} - \eta^{(k-1)} = 0$, problems~(${\cal R}_{\sigma^{(k)}}$) and~(${\cal R}_{\sigma^{(k-1)}}$) have the same unique optimizer. Moreover, item~(b) of Lemma~\ref{lem:equivalent} guarantees that $\bar{s}(\sigma^{(k)})$ and $\bar{s}(\sigma^{(k-1)})$ are the unique optimizers of~(${\cal R}_{\sigma^{(k)}}$) and~(${\cal R}_{\sigma^{(k-1)}}$), respectively. Hence, these facts yield
\begin{align}
\bar{s}_1(\sigma^{(k)}) = \bar{s}_1(\sigma^{(k-1)}), \quad \bar{s}_2(\sigma^{(k)}) = \bar{s}_2(\sigma^{(k-2)}). \label{eq:unique_opt_same}
\end{align}
As a result, it can be verified that $\|\bar{s}_1(\sigma^{(k)})\| \leq \|\bar{s}_1(\sigma^{(k-1)})\|$. Moreover, dividing both sides of \eqref{eq:s_bar_monotone3} by $\eta^{(k)}$ and both sides of \eqref{eq:s_bar_monotone4} by $\eta^{(k-1)}$, and then adding them, we obtain
\begin{align*}
\left( \frac{1}{\eta^{(k)}} - \frac{1}{\eta^{(k-1)}} \right) 
\left( \|\bar{s}_2(\sigma^{(k)})\|^2 - \|\bar{s}_2(\sigma^{(k-1)})\|^2 \right) \leq 0.
\end{align*}
Again using Assumption~\ref{asm:sigma_frac_zero} derives $1/\eta^{(k)} - 1/\eta^{(k-1)} \leq 0$. If $1/\eta^{(k)} - 1/\eta^{(k-1)} < 0$, we readily have $\|\bar{s}_2(\sigma^{(k-1)})\| \leq \|\bar{s}_2(\sigma^{(k)})\|$. If $1/\eta^{(k)} - 1/\eta^{(k-1)} = 0$, the same arguments described above leads to~\eqref{eq:unique_opt_same}. Therefore, both the cases ensures
$\|\bar{s}_2(\sigma^{(k-1)})\| \le \|\bar{s}_2(\sigma^{(k)})\|$.
\par
Next, we show that $\{ \| \bar{s}_{1} (\sigma^{(k)}) \| \}$ and $\{ \| \bar{s}_{2} (\sigma^{(k)}) \| \}$ are convergent. Since $\{ \| \bar{s}_{1} (\sigma^{(k)}) \| \}$ is a monotonically nonincreasing sequence bounded below, it is convergent. Thus, it suffices to show that $\{ \| \bar{s}_{2} (\sigma^{(k)}) \| \}$ is convergent. By Lemma~\ref{lem:true_sol_unique}, let $s_{2}^{\ast} \in \R^{m_2}$ be an optimal solution to 
$\min \{ \frac{1}{2} \| s_{2} \|^2 \colon s_{2} \in {\cal S}_{2} \}$.
Then $s_{2}^{\ast} \in {\cal S}_{2}$ holds. That is, there exists $x^{\ast} \in \dom f$ such that $s_{1}^{\ast} = b_1 - A_1 x^{\ast}$ and $s_{2}^{\ast} = b_2 - A_2 x^{\ast}$. Since $(x^{\ast}, s_{1}^{\ast}, s_{2}^{\ast})$ is feasible for problem (R$_{\sigma^{(k)}}$), we obtain
\begin{align}\label{eq:opt_feasible}
\frac{\sigma_1^{(k)}}{2} \|\bar{s}_1(\sigma^{(k)}) \|^2 + \frac{\sigma_2^{(k)}}{2} \|\bar{s}_2(\sigma^{(k)}) \|^2 
\leq 
\frac{\sigma_1^{(k)}}{2} \| s_1^{\ast} \|^2 + \frac{\sigma_2^{(k)}}{2} \| s_2^{\ast} \|^2.
\end{align}
Note that $(\bar{s}_1(\sigma^{(k)}), \bar{s}_2(\sigma^{(k)})) \in {\cal S}$. Moreover, Lemma~\ref{lem:caA_calB} ensures $(s_1^{\ast}, s_2^{\ast}) \in {\cal B}$, that is, $\frac{1}{2} \| s_1^{\ast} \|^2 = h(s_1^{\ast}, s_2^{\ast}) \leq h(\bar{s}_1(\sigma^{(k)}), \bar{s}_2(\sigma^{(k)})) = \frac{1}{2} \| \bar{s}_1(\sigma^{(k)}) \|^2$. It then follows from~\eqref{eq:opt_feasible} that $\|\bar{s}_2(\sigma^{(k)}) \| \leq \| s_2^{\ast} \|$. Hence, $\{ \| \bar{s}_2(\sigma^{(k)}) \| \}$ is a bounded and monotonically nondecreasing sequence, and hence it is convergent.
\end{proof}

\begin{theorem} \label{thm:sbar_sast}
Suppose that Assumptions~{\rm \ref{twoprob_solvable}},~{\rm \ref{ass:Adomf_closed}}, and~{\rm \ref{asm:sigma_frac_zero}} are satisfied. Then, the sequences $\{ \bar{s}_1(\sigma^{(k)}) \}$ and $\{ \bar{s}_2(\sigma^{(k)}) \}$ satisfy
\begin{align*}
\lim_{k \to \infty} \bar{s}_1(\sigma^{(k)}) = s_1^{\ast}, \quad \lim_{k \to \infty} \bar{s}_2(\sigma^{(k)}) = s_2^{\ast}.
\end{align*}
\end{theorem}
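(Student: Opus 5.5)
The plan is a compactness argument: show the sequence $\{\bar{s}(\sigma^{(k)})\}$ is bounded, prove that every accumulation point lies in ${\cal A}$, and then use ${\cal A} = \{(s_1^{\ast}, s_2^{\ast})\}$ from Lemma~\ref{lem:caA_calB}.

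\emph{Boundedness.} By Lemma~\ref{lem:s_converge}, the norms $\Vert \bar{s}_1(\sigma^{(k)})\Vert$ and $\Vert \bar{s}_2(\sigma^{(k)})\Vert$ converge, so the sequence $\{\bar{s}(\sigma^{(k)})\}$ is bounded.

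\emph{Accumulation points are feasible shifts.} Let $\hat{s} = (\hat{s}_1, \hat{s}_2)$ be the limit of some subsequence $\bar{s}(\sigma^{(k)})$, $k \in K$. Each $\bar{s}(\sigma^{(k)})$ belongs to ${\cal S} = b - A(\dom f)$. Since $A(\dom f)$ is closed by Assumption~\ref{ass:Adomf_closed}, ${\cal S}$ is closed, so $\hat{s} \in {\cal S}$.

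\emph{Accumulation points lie in ${\cal B}$.} Compare with the feasible point $(s_1^{\ast}, s_2^{\ast}) \in {\cal S}$, as in \eqref{eq:opt_feasible}, and divide by $\sigma_1^{(k)}$:
\begin{align*}
\Vert \bar{s}_1(\sigma^{(k)})\Vert^2 \leq \Vert s_1^{\ast}\Vert^2 + \frac{1}{\eta^{(k)}}\left( \Vert s_2^{\ast}\Vert^2 - \Vert \bar{s}_2(\sigma^{(k)})\Vert^2 \right).
\end{align*}
The bracket is bounded, and $1/\eta^{(k)} \to 0$ by Assumption~\ref{asm:sigma_frac_zero}. Passing to the limit along $K$ gives $\Vert \hat{s}_1\Vert \leq \Vert s_1^{\ast}\Vert$. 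Lemma~\ref{lem:caA_calB} gives $(s_1^{\ast}, s_2^{\ast}) \in {\cal B}$, so $\frac{1}{2}\Vert s_1^{\ast}\Vert^2 = \min\{h(s) \colon s \in {\cal S}\}$. Together with $\hat{s} \in {\cal S}$, this shows $h(\hat{s}) = \min\{h(s) \colon s \in {\cal S}\}$, that is, $\hat{s} \in {\cal B}$.

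\emph{Accumulation points lie in ${\cal A}$.} The proof of Lemma~\ref{lem:s_converge} already established $\Vert \bar{s}_2(\sigma^{(k)})\Vert \leq \Vert s_2^{\ast}\Vert$ for every $k$. In the limit this gives $g(\hat{s}) \leq g(s_1^{\ast}, s_2^{\ast})$. Since $(s_1^{\ast}, s_2^{\ast}) \in {\cal A}$ minimizes $g$ over ${\cal B}$ and $\hat{s} \in {\cal B}$, we conclude $\hat{s} \in {\cal A}$. Lemma~\ref{lem:caA_calB} then yields $\hat{s} = (s_1^{\ast}, s_2^{\ast})$.

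\emph{Conclusion.} The sequence is bounded and its only accumulation point is $s^{\ast}$, so the whole sequence converges to $s^{\ast}$.

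The main obstacle is the step showing accumulation points lie in ${\cal B}$. There one must control the $s_2$-term so that it vanishes after dividing by $\sigma_1^{(k)}$, which is exactly where $\eta^{(k)} \to \infty$ and the boundedness from Lemma~\ref{lem:s_converge} are used. A second point needing care is that the limit stays in ${\cal S}$, which relies on the closedness in Assumption~\ref{ass:Adomf_closed}.
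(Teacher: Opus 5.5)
Your proof is correct and follows the paper's route. You use Lemma~\ref{lem:s_converge} for boundedness and the closedness of ${\cal S}$ to keep limits feasible, then show membership in ${\cal B}$, then in ${\cal A}$, and finish with Lemma~\ref{lem:caA_calB}. The only difference is minor: where the paper tests the optimality inequality against arbitrary points of ${\cal S}$ (resp.\ ${\cal B}$), you test only against $s^{\ast}$ and import $s^{\ast}\in{\cal B}$, $s^{\ast}\in{\cal A}$ and the bound $\Vert \bar{s}_2(\sigma^{(k)})\Vert \leq \Vert s_2^{\ast}\Vert$ from the earlier lemmas, which is equally valid.
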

\begin{proof}
From Lemma~\ref{lem:s_converge}, the sequence $\{ \bar{s}(\sigma^{(k)}) = (\bar{s}_1(\sigma^{(k)}), \bar{s}_2(\sigma^{(k)}) ) \}$ has an accumulation point. Let $\widehat{s} \coloneqq (\widehat{s}_1, \widehat{s}_2) \in \R^{m_1} \times \R^{m_2}$ be an arbitrary accumulation point of $\{ \bar{s}(\sigma^{(k)}) \}$. Then, there exists a subsequence $\{ \bar{s}(\sigma^{(k_j)}) \} \subset \{ \bar{s}(\sigma^{(k)}) \}$ such that $\bar{s}(\sigma^{(k_j)}) \to \widehat{s}$ as $j \to \infty$. Let $j \in \mathbb{N} \cup \{ 0 \}$ be arbitrary. 
Lemma~\ref{lem:equivalent} ensures that
\begin{align} \label{widehat_s_in_S}
(\bar{s}_{1}(\sigma^{(k_j)}), \bar{s}_{2}(\sigma^{(k_j)})) \in \argmin \left\{ \frac{\sigma_{1}^{(k_j)}}{2} \Vert s_1 \Vert^2 + \frac{\sigma_2^{(k_j)}}{2} \Vert s_2 \Vert^2 \colon (s_1, s_2) \in {\cal S} \right\}.
\end{align}
% Recall that $\bar{s}(\sigma^{(k_j)})$ is a unique optimizer of~(${\cal R}_{\sigma^{(k_j)}}$). Hence, we obtain
% \begin{align} \label{widehat_s_in_S}
% (\bar{s}_{1}(\sigma^{(k_j)}), \bar{s}_{2}(\sigma^{(k_j)})) = \argmin \left\{ \frac{\sigma_{1}^{(k_j)}}{2} \Vert s_1 \Vert^2 + \frac{\sigma_2^{(k_j)}}{2} \Vert s_2 \Vert^2 \colon (s_1, s_2) \in {\cal S} \right\}.
% \end{align}
Notice that ${\cal S} = b - A(\dom f)$ is closed by Assumption~\ref{ass:Adomf_closed}, and that $(\bar{s}_{1}(\sigma^{(k_j)}), \bar{s}_{2}(\sigma^{(k_j)})) \in {\cal S}$ by~\eqref{widehat_s_in_S}. These facts imply that
\begin{align} \label{eq:s12_converge}
\lim_{j \to \infty} \bar{s}_{1}(\sigma^{(k_j)}) = \widehat{s}_{1}, \quad \lim_{j \to \infty} \bar{s}_{2}(\sigma^{(k_j)}) = \widehat{s}_{2}, \quad (\widehat{s}_{1}, \widehat{s}_{2}) \in {\cal S}.
\end{align}
Now, we show that $(\widehat{s}_{1}, \widehat{s}_{2}) \in {\cal B}$. Let $(s_1, s_2) \in {\cal S}$ be arbitrary. Noting~\eqref{widehat_s_in_S} yields
\begin{align} \label{ineq:s1s2_s1s2}
\frac{\sigma_1^{(k_j)}}{2} \Vert \bar{s}_{1}(\sigma^{(k_j)}) \Vert^2 + \frac{\sigma_2^{(k_j)}}{2} \Vert \bar{s}_{2}(\sigma^{(k_j)}) \Vert^2 \leq \frac{\sigma_1^{(k_j)}}{2} \Vert s_1 \Vert^2 + \frac{\sigma_2^{(k_j)}}{2} \Vert s_2 \Vert^2,
\end{align}
which leads to
\begin{align*}
\frac{1}{2} \Vert \bar{s}_{1}(\sigma^{(k_j)}) \Vert^2 + \frac{1}{2\eta^{(k_j)}} \Vert \bar{s}_{2}(\sigma^{(k_j)}) \Vert^2 \leq \frac{1}{2} \Vert s_1 \Vert^2 + \frac{1}{2\eta^{(k_j)}} \Vert s_2 \Vert^2.
\end{align*}
Since $1/\eta^{(k_j)} \to 0~(j \to \infty)$ from Assumption~\ref{asm:sigma_frac_zero}, it follows from~\eqref{eq:s12_converge} that $h(\widehat{s}_{1}, \widehat{s}_{2}) = \frac{1}{2} \Vert \widehat{s}_{1} \Vert^2 \leq \frac{1}{2} \Vert s_{1} \Vert^2 = h(s_1, s_2)$. Moreover, recall that $(\widehat{s}_1, \widehat{s}_2) \in {\cal S}$ is satisfied from~\eqref{eq:s12_converge}. Therefore, these facts and the definition of ${\cal B}$ derive $(\widehat{s}_{1}, \widehat{s}_{2}) \in {\cal B}$.
\par
Next, we prove the desired assertion. Let $(s_1, s_2) \in {\cal B}$. Recall that $(\bar{s}_{1}(\sigma^{(k_j)}), \bar{s}_{2}(\sigma^{(k_j)})) \in {\cal S}$ by~\eqref{widehat_s_in_S}. It then follows from $(s_1, s_2) \in {\cal B}$ and the definition of ${\cal B}$ that
\begin{align} \label{ineq:hs1s2}
\frac{1}{2} \Vert s_1 \Vert^2 = h(s_1, s_2) \leq h(\bar{s}_{1}(\sigma^{(k_j)}), \bar{s}_{2}(\sigma^{(k_j)})) = \frac{1}{2} \Vert \bar{s}_{1}(\sigma^{(k_j)}) \Vert^2    
\end{align}
Combining~\eqref{ineq:s1s2_s1s2} and~\eqref{ineq:hs1s2} yields $g(\bar{s}_{1}(\sigma^{(k_j)}),\bar{s}_{2}(\sigma^{(k_j)})) = \frac{1}{2} \Vert \bar{s}_{2}(\sigma^{(k_j)}) \Vert^2 \leq \frac{1}{2} \Vert s_2 \Vert^2 = g(s_1,s_2)$.
% \begin{align*} %\label{ineq:s1s2_s1s2}
% g(\bar{s}_{1}(\sigma^{(k_j)}),\bar{s}_{2}(\sigma^{(k_j)})) = \frac{1}{2} \Vert \bar{s}_{2}(\sigma^{(k_j)}) \Vert^2 \leq \frac{1}{2} \Vert s_2 \Vert^2 = g(s_1,s_2).
% %\frac{\sigma_1^{(k_j)}}{2} \Vert \bar{s}_{1}(\sigma^{(k_j)}) \Vert^2 + \frac{\sigma_2^{(k_j)}}{2} \Vert \bar{s}_{2}(\sigma^{(k_j)}) \Vert^2 \leq \frac{\sigma_1^{(k_j)}}{2} \Vert \bar{s}_{1}(\sigma^{(k_j)}) \Vert^2 + \frac{\sigma_2^{(k_j)}}{2} \Vert s_2 \Vert^2, \quad {\rm i.e.,} \quad \frac{1}{2} \Vert \bar{s}_{2}(\sigma^{(k)}) \Vert^2 \leq \frac{1}{2} \Vert s_2 \Vert^2.
% \end{align*}
Then, letting $j \to \infty$ and using~\eqref{eq:s12_converge} imply $g(\widehat{s}_{1}, \widehat{s}_{2}) \leq g(s_1, s_2)$. Since $(s_1, s_2) \in {\cal B}$ is arbitrary and $(\widehat{s}_{1}, \widehat{s}_{2}) \in {\cal B}$ holds, the definitions of ${\cal A}$ and ${\cal B}$ mean $(\widehat{s}_{1}, \widehat{s}_{2}) \in {\cal A}$. Then, Lemma~\ref{lem:caA_calB} ensures $(\widehat{s}_{1}, \widehat{s}_{2}) = (s_{1}^{\ast}, s_{2}^{\ast})$, that is, any accumulation point of $\{ \bar{s}(\sigma^{(k)}) \}$ is equal to $s^{\ast}$. Therefore, the sequences $\{ \bar{s}_1(\sigma^{(k)}) \}$ and $\{ \bar{s}_2(\sigma^{(k)}) \}$ converge to $s_1^{\ast}$ and $s_2^{\ast}$, respectively.
\end{proof}

\section{Augmented Lagrangian method and its global convergence}

From the discussion in Section~\ref{sec:app_opt_shift}, it has been shown that, if the sequences $\{ \sigma_1^{(k)} \}$ and $\{ \sigma_2^{(k)} \}$ are chosen so as to satisfy Assumption~\ref{asm:sigma_frac_zero}, then the approximate hierarchically optimal shift $\{ \bar{s}(\sigma^{(k)}) \}$ converges to the hierarchically optimal shift $s^{\ast}$. 
Therefore, the proposed method is expected to generate a sequence that converges to an optimal solution of~\eqref{p1_linear_slack_opt} if the augmented Lagrangian associated with the following optimization problem~(P$_{\bar{s}(\sigma)}$) with $\sigma = \sigma^{(k)}$ is minimized at each iteration $k \in \mathbb{N} \cup \{0\}$:
\begin{align}
\label{p1_linear_slack_approx_opt}
\begin{aligned}
& \mini_{x \in \R^{n}} && f(x)
\\
& \subj && A_1 x - b_1 + \bar{s}_1(\sigma) = 0, ~ A_2 x - b_2 + \bar{s}_2(\sigma) = 0, ~ x \in \dom f.
\end{aligned}
\tag{$\mathrm{P}_{\bar{s}(\sigma)}$}
\end{align}

\noindent
The augmented Lagrangian ${\cal F}$ associated with problem~\eqref{p1_linear_slack_approx_opt} is defined as follows:
\begin{align}
\begin{aligned} \label{def:cal_F}
{\cal F}_{\rho, \sigma}(x, \lambda_1, \lambda_2) &\coloneqq f(x) + \lambda_1^\top (A_1 x - b_1 + \bar{s}_1(\sigma)) + \lambda_2^\top (A_2 x - b_2 + \bar{s}_2(\sigma)) 
\\
& \hspace{30mm} + \frac{\rho}{2} \Vert A_1 x - b_1 + \bar{s}_1(\sigma) \Vert^2 + \frac{\rho}{2} \Vert A_2 x - b_2 + \bar{s}_2(\sigma) \Vert^2.
\end{aligned}
\end{align}
Here, $\lambda_1 \in \R^{m_1}$ and $\lambda_2 \in \R^{m_2}$ denote the Lagrange multipliers associated with the equality constraints, and $\rho > 0$ is a penalty parameter.

\par
We now present an augmented Lagrangian method for problem~\eqref{p0_general}. The formal statement of the proposed method is given as Algorithm~\ref{alg_alm_linear}. It should be noted that the method considered here is equipped with the infeasibility control framework that enables automatic handling of prioritized constraint violations. Moreover, the method generates auxiliary Lagrange multipliers in addition to the Lagrange multipliers. These auxiliary multipliers are introduced to stably compute the numerical minimization of the augmented Lagrangian function. In particular, for each iteration $k \in \mathbb{N} \cup \{0\}$, the auxiliary multipliers are obtained by projecting the Lagrange multipliers $\lambda_1^{(k)}$ and $\lambda_2^{(k)}$ onto compact sets $\Gamma_1 \subset \R^{m_1}$ and $\Gamma_2 \subset \R^{m_2}$, respectively. Here, for a compact set ${\cal C} \subset \R^{m}$, we denote by ${\cal P}_{{\cal C}}(z)$ the projection of $z \in \R^{m}$ onto ${\cal C}$.

\begin{algorithm}[htb]
\caption{(Augmented Lagrangian method with the infeasibility control framework)} \label{alg_alm_linear}
\begin{algorithmic}[1]
\State Choose positive constants $\tau \in (0,1)$ and $\gamma > 1$. %, $\kappa_1 > 0$, and $\kappa_2 > 0$. 
Select compact sets $\Gamma_1 \subset \R^{m_1}$ and $\Gamma_2 \subset \R^{m_2}$. Set positive sequences $\{ \sigma_1^{(k)} \} \subset \R_{++}$, $\{ \sigma_2^{(k)} \} \subset \R_{++}$, and $\{ \varepsilon^{(k)} \} \subset \R_{++}$ satisfying Assumption~\ref{asm:sigma_frac_zero} and $\varepsilon^{(k)} \to 0~(k\to \infty)$. Initialize $s_1^{(0)} \in \R^{m_1}$, $s_2^{(0)} \in \R^{m_2}$, $\widehat{\lambda}_1^{(0)} \in \R^{m_1}$, $\widehat{\lambda}_2^{(0)} \in \R^{m_2}$, $u^{(0)} > 0$, and $\rho^{(0)} > 0$. Set $k=0$.
\Repeat
\State Solve problem~(R$_{\sigma^{(k)}}$) and find $( \bar{s}_1(\sigma^{(k)}), \bar{s}_2(\sigma^{(k)}) ) \in \R^{m_1} \times \R^{m_2}$. \Comment{Step~1}
\State Solve the following problem, and find its $\varepsilon^{(k)}$-optimal solution $x^{(k+1)} \in \R^{n}$. \Comment{Step~2}
\begin{align*}
\begin{aligned}
& \mini_{x \in \R^{n}} && {\cal F}_{\rho^{(k)}, \sigma^{(k)}} (x, \widehat{\lambda}_1^{(k)}, \widehat{\lambda}_2^{(k)})
\\
& \subj && x \in \dom f,
\end{aligned}
\end{align*}

\State Update the Lagrange multipliers and the other parameters as follows: \Comment{Step~3}
\begin{align*}
s_1^{(k+1)} &\coloneqq A_1 x^{(k+1)} - b_1 + \bar{s}_1(\sigma^{(k)}), & s_2^{(k+1)} &\coloneqq A_2 x^{(k+1)} - b_2 + \bar{s}_2(\sigma^{(k)}),
\\
\lambda_1^{(k+1)} &\coloneqq \widehat{\lambda}_1^{(k)} + \rho^{(k)} s_1^{(k+1)}, & \lambda_2^{(k+1)} &\coloneqq \widehat{\lambda}_2^{(k)} + \rho^{(k)} s_2^{(k+1)},
\\
\widehat{\lambda}_1^{(k+1)} &\coloneqq {\cal P}_{\Gamma_1}(\widehat{\lambda}_1^{(k)} + \rho^{(k)} s_1^{(k+1)}), & \widehat{\lambda}_2^{(k+1)} &\coloneqq {\cal P}_{\Gamma_2}(\widehat{\lambda}_2^{(k)} + \rho^{(k)} s_2^{(k+1)}),
\\
% \sigma_1^{(k+1)} &\coloneqq \kappa_1 \sigma_1^{(k)}, & \sigma_2^{(k+1)} &\coloneqq \kappa_2 \sigma_2^{(k)},
% \\
u^{(k+1)} &\coloneqq \Vert s_1^{(k+1)} \Vert + \Vert s_2^{(k+1)} \Vert,
&
\rho^{(k+1)} &\coloneqq \left\{
\begin{aligned}
& \rho^{(k)} && {\rm if} ~ u^{(k+1)} \leq \tau u^{(k)},
\\
& \gamma \rho^{(k)} && {\rm otherwise}.
\end{aligned}
\right.
\end{align*}

\State Set $k \gets k+1$. % and use the prescribed values $\sigma_1^{(k)}$, $\sigma_2^{(k)}$, and $\varepsilon^{(k)}$ in the next iteration.
\Until{the termination criterion is satisfied.}
\end{algorithmic}
\end{algorithm}

\begin{remark}
If problem~\eqref{p0_general} is feasible, then the optimal solution of problem~(R$_{\sigma^{(k)}}$) is given by $(\bar{x}(\sigma^{(k)}), 0,0)$, that is, $( \bar{s}_1(\sigma^{(k)}), \bar{s}_2(\sigma^{(k)}) ) = (0,0)$. In this case, Algorithm~\ref{alg_alm_linear} reduces to the standard augmented Lagrangian method.
\end{remark}

\par
Now, we define the function $\theta \colon \R^{m_1} \times \R^{m_2} \to \R \cup \{ + \infty \}$ as
\begin{align*}
\theta(\lambda_{1}, \lambda_{2}) \coloneqq - \inf_{x \in \dom f} {\cal L}(x, \lambda_{1}, \lambda_{2}),
\end{align*}
where ${\cal L} \colon \R^{n} \times \R^{m_1} \times \R^{m_2} \to \R \cup \{ + \infty \}$ is the Lagrange function of~\eqref{p0_general}, that is,
\begin{align*}
{\cal L}(x, \lambda_1, \lambda_2) \coloneqq f(x) + \lambda_1^\top (A_1 x - b_1) + \lambda_2^\top (A_2 x - b_2).
\end{align*}

\noindent
In what follows, we discuss the well-definedness of Algorithm~\ref{alg_alm_linear}. Recall that the solvability of~(R$_{\sigma^{(k)}}$) is verified by Proposition~\ref{pro:solvability_Rsigma}. Hence, if the minimization problem of the augmented Lagrangian can be solved at each iteration $k \in \mathbb{N} \cup \{ 0 \}$, Algorithm~\ref{alg_alm_linear} works well. Hence, we show its solvability. To this end, the property of $\theta$ is given.

\begin{lemma}\label{lem:dual_feasible}
Suppose that Assumptions~{\rm \ref{twoprob_solvable}} and~{\rm \ref{ass:sast_solvable}} are satisfied. Then, the function $\theta$ is closed, proper, and convex, and it satisfies
\begin{align*}
\theta(\lambda_1, \lambda_2) 
= f^{\ast}(- A_1^{\top} \lambda_1 - A_2^{\top} \lambda_2) + \lambda_1^{\top}b_1 + \lambda_2^{\top}b_2 \quad \forall (\lambda_1, \lambda_2) \in \dom \theta.
\end{align*}
\end{lemma}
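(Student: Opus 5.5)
The plan is to write $\theta$ as a supremum of affine functions of $\lambda$ and to read off the conjugate formula from that. I would then handle properness, the delicate part, through strong duality for \eqref{p1_linear_slack_opt} under Slater's condition.

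\textbf{Formula and convexity.} For each $x \in \dom f$ we have
\begin{align*}
-{\cal L}(x,\lambda_1,\lambda_2) = -f(x) - \lambda_1^\top(A_1x-b_1) - \lambda_2^\top(A_2x-b_2),
\end{align*}
which is affine in $(\lambda_1,\lambda_2)$. Hence
\begin{align*}
\theta(\lambda_1,\lambda_2) = \sup_{x\in\dom f}\{-{\cal L}(x,\lambda_1,\lambda_2)\}
\end{align*}
is a pointwise supremum of affine (hence continuous) functions. Such a supremum has a closed convex epigraph, being an intersection of closed half-spaces, so $\theta$ is convex and lower semicontinuous. Rearranging the supremum gives
\begin{align*}
\theta(\lambda_1,\lambda_2) = \sup_{x\in\dom f}\{(-A_1^\top\lambda_1-A_2^\top\lambda_2)^\top x - f(x)\} + \lambda_1^\top b_1+\lambda_2^\top b_2 .
\end{align*}
The first term is exactly $f^\ast(-A_1^\top\lambda_1-A_2^\top\lambda_2)$ by the definition of the conjugate in the Notation section. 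This identity actually holds for every $(\lambda_1,\lambda_2)$ in $\R^{m_1}\times\R^{m_2}$, in the extended sense, so in particular on $\dom\theta$.

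\textbf{Properness.} Since $\dom f\neq\emptyset$ ($f$ is proper by Assumption~\ref{ass:sast_solvable}(i)), $\theta>-\infty$ everywhere. It remains to exhibit some $\lambda$ with $\theta(\lambda)<+\infty$, and this is the main step.

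To do so, I would invoke Assumption~\ref{ass:sast_solvable}(ii). It says that \eqref{p1_linear_slack_opt} is a convex problem with only affine equality constraints, and that Slater's condition holds with a point in ${\rm ri}(\dom f)$. Its optimal value $p^\ast$ is finite. Indeed, the problem is feasible, and a solution exists by Assumption~\ref{ass:ps_solvable}; alternatively, finiteness follows because $f$ is proper closed convex and hence bounded below by an affine function on the affine feasible set. I expect that a minimizer is not truly needed, only $p^\ast>-\infty$. If this is an issue, I would argue that $p^\ast=-\infty$ is excluded by the standing Assumption~\ref{ass:ps_solvable}, using that $s^\ast\in{\cal S}$, which follows from Assumption~\ref{twoprob_solvable} via Lemma~\ref{lem:caA_calB}.

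The standard Lagrangian strong duality theorem (Rockafellar, Theorem~28.2: convex $f$, affine equalities, and a feasible point in ${\rm ri}(\dom f)$) then yields multipliers $(\lambda_1^\ast,\lambda_2^\ast)$ with
\begin{align*}
\inf_{x\in\dom f}\{f(x)+\lambda_1^{\ast\top}(A_1x-b_1+s_1^\ast)+\lambda_2^{\ast\top}(A_2x-b_2+s_2^\ast)\} = p^\ast > -\infty .
\end{align*}
Comparing with ${\cal L}$ gives
\begin{align*}
\inf_{x\in\dom f}{\cal L}(x,\lambda^\ast) = p^\ast - \lambda_1^{\ast\top}s_1^\ast - \lambda_2^{\ast\top}s_2^\ast ,
\end{align*}
because the shift only adds a constant. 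Therefore $\theta(\lambda^\ast)<+\infty$, so $\dom\theta\neq\emptyset$ and $\theta$ is proper.

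The obstacle is precisely this last part: the finiteness of $p^\ast$ and the applicability of the relative-interior Slater theorem, which requires the equality constraints to be affine. The rest is routine.
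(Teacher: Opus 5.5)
Your proof is correct and follows the paper's route. Strong duality for \eqref{p1_linear_slack_opt} under the relative-interior Slater condition gives a multiplier $(\lambda_1^\ast,\lambda_2^\ast)$ with $\theta(\lambda_1^\ast,\lambda_2^\ast)<+\infty$, and the conjugate formula and closed convexity come from the supremum-of-affine-functions structure (the paper phrases the latter through the closedness of $f^\ast$).

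Keep Assumption~\ref{ass:ps_solvable} as your only source of $p^\ast>-\infty$, and delete the ``alternatively'' argument, which is false. For $f(x)=x_2$ on $\R^2$ with $A_1=A_2=[\,1\ \ 0\,]$ and $b_1=b_2=0$, Assumptions~\ref{twoprob_solvable} and~\ref{ass:sast_solvable} hold, yet $\theta\equiv+\infty$. So the lemma genuinely needs Assumption~\ref{ass:ps_solvable}, and the paper's proof tacitly uses it too when it asserts that \eqref{p1_linear_slack_opt} has an optimizer.
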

\begin{proof}
Recall that problem~\eqref{p1_linear_slack_opt} is convex by item~(i) of Assumption~\ref{ass:sast_solvable}. Moreover, item~(ii) of Assumption~\ref{ass:sast_solvable} ensures that~\eqref{p1_linear_slack_opt} has an optimizer $\bar{x}$ and satisfies Slater's CQ. It then follows from~\cite[Proposition~5.3.3]{B2009} that the dual problem also has an optimizer $(\bar{\lambda}_1, \bar{\lambda}_2)$, and it satisfies $f(\bar{x}) = - \theta(\bar{\lambda}_1, \bar{\lambda}_2) + \bar{\lambda}_1^{\top} s_1^{\ast} + \bar{\lambda}_2^{\top} s_2^{\ast}$, that is, $\dom \theta \not = \emptyset$. 
\par
We take $(\lambda_1, \lambda_2) \in \dom \theta$ arbitrarily. According to the definitions of $\theta$ and its conjugate function, it can be verified that
\begin{align}
\theta(\lambda_1, \lambda_2) 
&= \sup_{x \in \dom f} \{ - f(x) - \lambda_1^{\top} (A_1 x - b_1) - \lambda_2^{\top}(A_2 x - b_2) \} \nonumber
\\
&= \sup_{x \in \dom f} \{ ( - A_1^{\top} \lambda_1 - A_2^{\top} \lambda_2 )^{\top}x - f(x) \} + \lambda_1^{\top}b_1 + \lambda_2^{\top}b_2 \nonumber
\\
&= f^{\ast}(- A_1^{\top} \lambda_1 - A_2^{\top} \lambda_2) + \lambda_1^{\top}b_1 + \lambda_2^{\top}b_2. \label{eq:theta_fast}
\end{align}
Now, we notice that $f^{\ast}$ is a closed proper convex function from item~(i) of Assumption~\ref{ass:sast_solvable} and~\cite[Corollary~12.2.1]{R1972}. It then follows from $\dom \theta \not = \emptyset$ and~\eqref{eq:theta_fast} that $\theta$ is a closed proper convex function.
\end{proof}

For each $\sigma \in \R_{++}^{2}$, we define the functions ${\cal L}_{\sigma} \colon \R^{n} \times  \R^{m_1} \times \R^{m_2} \to \R \cup \{ +\infty \}$, $\theta_{\sigma} \colon \R^{m_1} \times \R^{m_2} \to \R \cup \{ +\infty \}$, and $\nu_{\sigma} \colon \R^{m_1} \times \R^{m_2} \to \R \cup \{ +\infty \}$ as
\begin{align*}
{\cal L}_{\sigma}(x, \lambda_1, \lambda_2) 
&\coloneqq f(x) + \lambda_1^{\top}(A_1 x - b_1 + \bar{s}_1(\sigma)) + \lambda_2^{\top}(A_2 x - b_2 + \bar{s}_2(\sigma)),
\\
\theta_{\sigma}(\lambda_1, \lambda_2) 
&\coloneqq - \inf_{x \in \dom f} {\cal L}_{\sigma}(x, \lambda_1, \lambda_2),
\\
\nu_{\sigma}(s_1, s_2) 
&\coloneqq \min_{x \in \dom f} \{ f(x) \colon A_1 x - b_1 + \bar{s}_1(\sigma) + s_1 = 0, A_2 x - b_2 + \bar{s}_2(\sigma) + s_2 = 0 \},
\end{align*}
respectively, where we define $\nu_{\sigma}(s_1, s_2) = +\infty$ if the corresponding minimization problem is infeasible. Note that $\nu_{\sigma}$ is well-defined under Assumption~\ref{ass:ps_solvable}. Recall that $(\bar{x}(\sigma), \bar{s}_1(\sigma), \bar{s}_2(\sigma))$ is an optimal solution of~\eqref{p_sigma}, that is, $\nu_{\sigma}(0,0) \leq f(\bar{x}(\sigma))$. This implies that $\dom \nu_{\sigma} \not = \emptyset$. Moreover, from the definitions of ${\cal L}$ and $\theta$, it is clear that
\begin{align}
\theta_{\sigma}(\lambda_1, \lambda_2) 
% &= - \inf_{x \in \dom f} {\cal L}_{\sigma}(x, \lambda_1, \lambda_2) \nonumber
% \\
= - \inf_{x \in \dom f} {\cal L}(x, \lambda_1, \lambda_2) + \lambda_1^{\top}\bar{s}_1(\sigma) + \lambda_2^{\top} \bar{s}_2(\sigma)
= \theta(\lambda_1, \lambda_2) + \lambda_1^{\top}\bar{s}_1(\sigma) + \lambda_2^{\top} \bar{s}_2(\sigma) \label{eq:theta_sigma_theta}
\end{align}
for any $(\lambda_1, \lambda_2) \in \R^{m_1} \times \R^{m_2}$.

\begin{lemma} \label{lem:nu_theta}
If Assumptions~{\rm \ref{ass:ps_solvable}}--{\rm \ref{ass:Adomf_closed}} are satisfied, then $\theta_{\sigma}^{\ast} = \cl \nu_{\sigma}$ holds.
\end{lemma}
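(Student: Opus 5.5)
We will prove the identity by computing $\nu_{\sigma}^{\ast}$ directly and then invoking the Fenchel--Moreau theorem. The plan is to show $\nu_{\sigma}^{\ast} = \theta_{\sigma}$, and then that $\nu_{\sigma}$ is convex with $\cl \nu_{\sigma}$ proper, so that $\theta_{\sigma}^{\ast} = \nu_{\sigma}^{\ast\ast} = \cl \nu_{\sigma}$ by \cite[Theorem~12.2]{R1972}.

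\textbf{Step 1: the conjugate.} Write $c_{\sigma}(x) \coloneqq A x - b + \bar{s}(\sigma)$, with blocks $c_{1,\sigma}$ and $c_{2,\sigma}$. For $(\lambda_1,\lambda_2)$ we compute
\begin{align*}
\nu_{\sigma}^{\ast}(\lambda_1,\lambda_2) = \sup_{(s_1,s_2)} \Bigl\{ \lambda_1^{\top} s_1 + \lambda_2^{\top} s_2 - \nu_{\sigma}(s_1,s_2) \Bigr\}.
\end{align*}
By Assumption~\ref{ass:ps_solvable} the value $\nu_{\sigma}(s_1,s_2)$ is finite and attained whenever the constraint set is nonempty. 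Indeed, $(s_1,s_2)\in\dom\nu_{\sigma}$ iff $\bar{s}(\sigma)+s\in{\cal S}$, and then it is an attained infimum of $f$ over a shifted problem. Hence the supremum over $s$ equals the joint supremum over pairs $(x,s)$ with $x\in\dom f$ and $s = -c_{\sigma}(x)$, that is,
\begin{align*}
\nu_{\sigma}^{\ast}(\lambda) = \sup_{x \in \dom f} \bigl\{ -\lambda_1^{\top} c_{1,\sigma}(x) - \lambda_2^{\top} c_{2,\sigma}(x) - f(x) \bigr\} = -\inf_{x\in\dom f} {\cal L}_{\sigma}(x,\lambda_1,\lambda_2) = \theta_{\sigma}(\lambda).
\end{align*}
We only need the attainment, or even just the infimum definition, to swap the suprema. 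The argument is routine, so the existence assumption is not really essential here.

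\textbf{Step 2: convexity and properness of $\nu_{\sigma}$.} Convexity is the standard perturbation-function argument. Take feasible $x, x'$ for perturbations $s, s'$. Then the point $tx+(1-t)x'$ is feasible for $ts+(1-t)s'$, since the constraints are affine and $\dom f$ is convex, and convexity of $f$ gives the inequality. Moreover $\nu_{\sigma}(0,0)<\infty$, so $\dom\nu_{\sigma}\neq\emptyset$.

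The main obstacle is showing that $\cl\nu_{\sigma}$ does not take the value $-\infty$, i.e.\ that $\nu_{\sigma}$ has an affine minorant; equivalently, $\theta_{\sigma}$ is proper. This is where we use Lemma~\ref{lem:dual_feasible}, which gives $\dom\theta\neq\emptyset$. By \eqref{eq:theta_sigma_theta}, $\theta_{\sigma}=\theta+$ a linear term, so $\dom\theta_{\sigma}=\dom\theta\neq\emptyset$. Pick $\lambda\in\dom\theta_{\sigma}$. Then Step~1 yields $\nu_{\sigma}(s)\ge \lambda^{\top}s-\theta_{\sigma}(\lambda)$ for all $s$, which is an affine minorant. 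Thus $\cl\nu_{\sigma}$ is a closed proper convex function and equals $\nu_{\sigma}^{\ast\ast}$.

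\textbf{Conclusion.} Combining both steps gives $\theta_{\sigma}^{\ast}=\nu_{\sigma}^{\ast\ast}=\cl\nu_{\sigma}$. Assumption~\ref{ass:Adomf_closed} enters only through the well-definedness of $\bar{s}(\sigma)$, via Proposition~\ref{pro:solvability_Rsigma}. Assumptions~\ref{twoprob_solvable} and~\ref{ass:sast_solvable} enter through Lemma~\ref{lem:dual_feasible}.
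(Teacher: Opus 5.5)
Your proposal is correct and follows essentially the same route as the paper: compute $\nu_{\sigma}^{\ast}=\theta_{\sigma}$ by interchanging the suprema over $s$ and $x$, check that $\nu_{\sigma}$ is convex with nonempty domain, use Lemma~\ref{lem:dual_feasible} with \eqref{eq:theta_sigma_theta} to secure properness, and conclude via \cite[Theorem~12.2]{R1972}. The one difference is minor: you obtain properness of $\cl\nu_{\sigma}$ from the affine minorant that $\dom\theta_{\sigma}\neq\emptyset$ provides, which makes the attainment in Assumption~\ref{ass:ps_solvable} unnecessary here, whereas the paper simply asserts that $\nu_{\sigma}$ is proper.
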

\begin{proof}
Let $\sigma \in \R_{++}^{2}$ be arbitrary. %By Lemma~\ref{lem:dual_feasible} and~\eqref{eq:theta_sigma_theta}, we obtain $\dom \theta_{\sigma} \not = \emptyset$. 
Let $(\lambda_1, \lambda_2) \in \R^{m_1} \times \R^{m_2}$ be arbitrary. Note that \begin{align*}
\nu_{\sigma}(s_1, s_2) = -\sup_{x \in \dom f} \{ - f(x) - \delta_{\{ 0 \}} (A_1 x - b_1 + \bar{s}_1(\sigma) + s_1) - \delta_{\{ 0 \}} (A_2 x - b_2 + \bar{s}_2(\sigma) + s_2) \}
\end{align*}
for all $(s_1, s_2) \in \dom \nu_{\sigma}$. Using this equality and the definitions of $\nu_{\sigma}$, $\nu_{\sigma}^{\ast}$, and $\theta_{\sigma}$ derives
\begin{align}
\nu_{\sigma}^{\ast}(\lambda_1, \lambda_2)
&= \sup_{(s_1, s_2) \in \dom \nu_{\sigma}} \{ \lambda_1^{\top} s_1 + \lambda_2^{\top} s_2 - \nu_{\sigma}(s_1, s_2) \} \nonumber
% \\
% &= \sup_{(s_1, s_2) \in \dom \nu} \left\{ \sup_{x \in \dom f} \{ \lambda_1^{\top}s_1 + \lambda_2^{\top}s_2 - f(x) - \delta_{\{ 0 \}}(A_1x - b_1 + \bar{s}_1(\sigma) + s_1) - \delta_{\{ 0 \}}(A_2x - b_2 + \bar{s}_2(\sigma) + s_2) \} \right\} \nonumber
\\
&= \sup_{x \in \dom f} \{ - f(x) - \lambda_1^{\top} (A_1 x - b_1 + \bar{s}_1(\sigma)) - \lambda_2^{\top} (A_2 x - b_2 + \bar{s}_2(\sigma)) \} \nonumber
\\
&= - \inf_{x \in \dom f} {\cal L}_{\sigma}(x, \lambda_1, \lambda_2) \nonumber
\\
&= \theta_{\sigma}(\lambda_1, \lambda_2). \label{eq:nuast_theta}
\end{align}
Meanwhile, Lemma~\ref{lem:dual_feasible} and~\eqref{eq:theta_sigma_theta} ensure that $\theta_{\sigma}$ is a closed proper convex function, and hence so is $\nu_{\sigma}^{\ast}$. Moreover, it is easy to see that $\nu_\sigma$ is proper and convex. Indeed, $\dom\nu_\sigma\neq\emptyset$, and the convexity follows from the convexity of
$f$ and the linearity of the constraints defining $\nu_\sigma$.
It then follows from~\eqref{eq:nuast_theta} and~\cite[Theorem~12.2]{R1972} that $\theta_{\sigma}^{\ast} = \nu_{\sigma}^{\ast\ast} = \cl \nu_{\sigma}$.
\end{proof}

\begin{proposition} \label{pro:welldef}
Suppose that Assumptions~{\rm \ref{ass:ps_solvable}}--{\rm \ref{ass:Adomf_closed}} are valid. Then, for any $\varepsilon > 0$, $\rho > 0$, $\sigma = (\sigma_1, \sigma_2) \in \R_{++}^2$, and $\lambda = (\lambda_1, \lambda_2) \in \R^{m_1} \times \R^{m_2}$, the problem of minimizing ${\cal F}_{\rho, \sigma}( \, \cdot \, , \lambda_1, \lambda_2)$ over $\dom f$ has at least one $\varepsilon$-optimal solution.
\end{proposition}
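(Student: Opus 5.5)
The goal is to show that $\inf\{{\cal F}_{\rho,\sigma}(x,\lambda_1,\lambda_2)\colon x\in\dom f\}$ is finite. Once that is known, an $\varepsilon$-optimal solution exists by the definition of the infimum: pick $x\in\dom f$ whose value lies within $\varepsilon$ of the infimum. Since $\dom f\neq\emptyset$, the infimum is $<+\infty$, so the whole task is a lower bound.

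Step 1 (a dual point for the shifted problem). By Lemma~\ref{lem:dual_feasible} and \eqref{eq:theta_sigma_theta}, $\theta_\sigma$ is proper. Hence there is $(\mu_1,\mu_2)$ with $\theta_\sigma(\mu_1,\mu_2)<+\infty$, which means that for all $x\in\dom f$
\begin{align*}
f(x)\ \ge\ -\theta_\sigma(\mu_1,\mu_2)-\mu_1^\top r_1(x)-\mu_2^\top r_2(x),
\end{align*}
where $r_i(x)\coloneqq A_ix-b_i+\bar s_i(\sigma)$. Lemma~\ref{lem:nu_theta} is not actually needed for this step; only properness of $\theta_\sigma$ is used, and that follows from $\dom\theta\neq\emptyset$, which is where the Slater condition in Assumption~\ref{ass:sast_solvable} enters.

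Step 2 (lower bound on ${\cal F}$). Substituting the inequality from Step 1 into \eqref{def:cal_F} gives
\begin{align*}
{\cal F}_{\rho,\sigma}(x,\lambda_1,\lambda_2)\ \ge\ -\theta_\sigma(\mu_1,\mu_2)+\sum_{i=1}^2\Bigl[(\lambda_i-\mu_i)^\top r_i(x)+\frac{\rho}{2}\Vert r_i(x)\Vert^2\Bigr].
\end{align*}
Each bracket is a quadratic in $r_i$ and is bounded below by $-\Vert\lambda_i-\mu_i\Vert^2/(2\rho)$. Therefore the infimum is at least
\begin{align*}
-\theta_\sigma(\mu_1,\mu_2)-\frac{1}{2\rho}\bigl(\Vert\lambda_1-\mu_1\Vert^2+\Vert\lambda_2-\mu_2\Vert^2\bigr)>-\infty,
\end{align*}
which concludes the argument.

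The main obstacle is Step 1, namely ensuring that the dual function of the shifted problem is proper. If a justification is needed that does not go through Lemma~\ref{lem:dual_feasible}, I would fall back on $\theta_\sigma^\ast=\cl\nu_\sigma$ from Lemma~\ref{lem:nu_theta}: since $\nu_\sigma(0,0)<\infty$, the function $\nu_\sigma$ is proper, and one would then need $\cl\nu_\sigma$ to be proper as well. That is delicate, so I prefer the direct route through Lemma~\ref{lem:dual_feasible}.
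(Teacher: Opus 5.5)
Your proof is correct, and it takes a more direct route than the paper's. The paper forms the Moreau envelope $\theta_{\sigma,\rho}$ of $\theta_\sigma$ and notes that it is finite because $\theta_\sigma$ is closed proper convex. It then combines Fenchel--Rockafellar duality, the identity $\theta_\sigma^\ast=\cl\nu_\sigma$ from Lemma~\ref{lem:nu_theta}, and an epigraph-closure argument that replaces $\cl\nu_\sigma$ by $\nu_\sigma$. The result is the exact identity $\theta_{\sigma,\rho}(\lambda_1,\lambda_2)=-\inf\{{\cal F}_{\rho,\sigma}(x,\lambda_1,\lambda_2)\colon x\in\dom f\}$.

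For the proposition, however, only the inequality $\inf_{x\in\dom f}{\cal F}_{\rho,\sigma}(x,\lambda_1,\lambda_2)\ge-\theta_{\sigma,\rho}(\lambda_1,\lambda_2)$ matters. That is the weak-duality direction, and it is what you derive by hand: your bound, maximized over $(\mu_1,\mu_2)$, equals $-\theta_{\sigma,\rho}(\lambda_1,\lambda_2)$. Your argument therefore needs only $\dom\theta\neq\emptyset$, which is where Slater enters through Lemma~\ref{lem:dual_feasible}. It does not use closedness of $\theta_\sigma$, Lemma~\ref{lem:nu_theta}, or any conjugate-duality theorem. Since $\dom\theta_\sigma=\dom\theta$ regardless of the shift, it also shows the augmented Lagrangian is bounded below for any fixed shift vector in place of $\bar s(\sigma)$, with an explicit lower bound.

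What the paper's heavier argument buys is the equality itself, i.e., the classical reading of the subproblem value as the negative Moreau envelope of the shifted dual function. The paper's later convergence theorem does not invoke it. Your worry about properness of $\cl\nu_\sigma$ is moot in any case: $\cl\nu_\sigma=\theta_\sigma^\ast$ is the conjugate of a closed proper convex function and hence proper, though your route never needs this.
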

\begin{proof}
%Let $\varepsilon > 0$, $\rho > 0$, $\sigma = (\sigma_1, \sigma_2) \in \R_{++}^2$, and $\lambda = (\lambda_1, \lambda_2) \in \R^{m_1} \times \R^{m_2}$ be arbitrary. 
For simplicity, we define
\begin{align*}
\theta_{\sigma, \rho}(\lambda_1, \lambda_2) \coloneqq \inf \left\{ \theta_{\sigma}(\mu_1, \mu_2) + \frac{1}{2\rho} \Vert \lambda_1 - \mu_1 \Vert^{2} + \frac{1}{2\rho} \Vert \lambda_2 - \mu_2 \Vert^{2} \colon (\mu_1, \mu_2) \in \R^{m_1} \times \R^{m_2} \right\},
\end{align*}
where $\theta_{\sigma, \rho}$ is known as the Moreau envelope of $\theta_{\sigma}$ with the parameter $\rho$. Since $\theta_{\sigma}$ is a closed proper convex function according to Lemma~\ref{lem:dual_feasible} and~\eqref{eq:theta_sigma_theta}, we have by~\cite[Proposition~12.15]{BauschkeCombettes2010} that $\theta_{\sigma, \rho}(\lambda_1, \lambda_2)$ is finite, and the minimization problem of its right-hand side has a unique minimizer. Now, we consider the conjugate function of $r_{\rho, \lambda} \colon \R^{m_1} \times \R^{m_2} \to \R$, which is defined as
\begin{align*}
r_{\rho, \lambda}(\mu_1, \mu_2) &\coloneqq \frac{1}{2\rho} \Vert \lambda_1 - \mu_1 \Vert^{2} + \frac{1}{2\rho} \Vert \lambda_2 - \mu_2 \Vert^{2}.
\end{align*}
By simple calculation, we have
\begin{align}
r_{\rho, \lambda}^{\ast}(\mu_1, \mu_2) = \frac{\rho}{2} \Vert \mu_1 \Vert^2 + \frac{\rho}{2} \Vert \mu_2 \Vert^2 + \lambda_1^{\top} \mu_1 + \lambda_2^{\top} \mu_2. \label{eq:q_sig_lam}
\end{align}
Meanwhile, exploiting~\cite[Theorem~15.23 and Proposition~15.24]{BauschkeCombettes2010} and Lemma~\ref{lem:nu_theta} implies
\begin{align}
\theta_{\sigma, \rho}(\lambda_1, \lambda_2) 
= -\inf \{ \cl \nu_{\sigma}(\mu_1, \mu_2) + r^{\ast}_{\rho, \lambda}(-\mu_1, -\mu_2) \colon (\mu_1, \mu_2) \in \R^{m_1} \times \R^{m_2} \}. \label{eq:theta_sigma}
% \\
% &= -\inf \{ \nu_{\sigma}(-\mu_1, -\mu_2) + r^{\ast}_{\rho, \lambda}(\mu_1, \mu_2) \colon (\mu_1, \mu_2) \in \R^{m_1} \times \R^{m_2} \}. 
\end{align}
Now, let us denote $\alpha \coloneqq \inf \{ \cl \nu_{\sigma}(\mu_1, \mu_2) + r^{\ast}_{\rho, \lambda}(-\mu_1, -\mu_2) \colon (\mu_1, \mu_2) \in \R^{m_1} \times \R^{m_2} \}$ and $\beta \coloneqq \inf \{ \nu_{\sigma}(\mu_1, \mu_2) + r^{\ast}_{\rho, \lambda}(-\mu_1, -\mu_2) \colon (\mu_1, \mu_2) \in \R^{m_1} \times \R^{m_2} \}$, and we will show that $\alpha = \beta$.
% \begin{align}
% \theta_{\sigma, \rho}(\lambda_1, \lambda_2) = - \inf \{ \nu_{\sigma}(\mu_1, \mu_2) + r^{\ast}_{\rho, \lambda}(-\mu_1, -\mu_2) \colon (\mu_1, \mu_2) \in \R^{m_1} \times \R^{m_2} \}. \label{eq:alpha_beta}
% \end{align}
% For simplicity, we denote the right-hand sides of~\eqref{eq:theta_sigma} and~\eqref{eq:alpha_beta} by $\alpha$ and $\beta$, respectively. 
Let $\delta > 0$ be arbitrary. From the definition of $\alpha$, there exists $(\mu_1(\delta), \mu_2(\delta)) \in \R^{m_1} \times \R^{m_2}$ such that 
\begin{align}
\cl \nu_{\sigma}(\mu_1(\delta),\mu_2(\delta)) + r_{\rho, \lambda}^{\ast}(-\mu_1(\delta),-\mu_2(\delta)) < \alpha + \delta. \label{ineq:beta_delta}
\end{align}
Moreover, it can be easily seen that $(\mu_1(\delta), \mu_2(\delta), \cl \nu_{\sigma}(\mu_1(\delta),\mu_2(\delta))) \in \epi \cl \nu_{\sigma} = \cl \epi \nu_{\sigma}$, that is, there exists $\{ (\mu_1^{(j)}, \mu_2^{(j)}, \gamma^{(j)}) \} \subset \epi \nu_{\sigma}$ such that 
\begin{align}
\lim_{j \to \infty} \mu_1^{(j)} = \mu_1(\delta), \quad \lim_{j \to \infty} \mu_2^{(j)} = \mu_2(\delta), \quad \lim_{j \to \infty} \gamma^{(j)} = \cl \nu_{\sigma}(\mu_1(\delta),\mu_2(\delta)). \label{eq:three_lim}
\end{align}
By noting $(\mu_1^{(j)}, \mu_2^{(j)}, \gamma^{(j)}) \in \epi \nu_{\sigma} \subset \cl(\epi \nu_{\sigma}) = \epi \cl \nu_{\sigma}$ and the definition of $\beta$, we obtain $\beta \leq \nu_{\sigma}(\mu_1^{(j)}, \mu_2^{(j)}) + r_{\rho, \lambda}^{\ast}(-\mu_1^{(j)}, -\mu_2^{(j)}) \leq \gamma^{(j)} + r_{\rho, \lambda}^{\ast}(-\mu_1^{(j)}, -\mu_2^{(j)})$ for any $j \in \mathbb{N}$. Since~\eqref{eq:three_lim} is satisfied, taking $j \to \infty$ yields $\beta \leq \cl \nu_{\sigma}(\mu_1(\delta),\mu_2(\delta)) + r_{\rho, \lambda}^{\ast}(-\mu_1(\delta),-\mu_2(\delta))$. This fact and~\eqref{ineq:beta_delta} derive $\beta < \alpha + \delta$, that is, $\beta \leq \alpha$ because $\delta > 0$ is arbitrary. On the other hand, from $(\mu_1, \mu_2, \nu_{\sigma}(\mu_1,\mu_2)) \in \epi \nu_{\sigma} \subset \cl(\epi \nu_{\sigma}) = \epi \cl \nu_{\sigma}$, it is clear that $\cl \nu_{\sigma}(\mu_1, \mu_2) \leq \nu_{\sigma}(\mu_1, \mu_2)$ for any $(\mu_1, \mu_2) \in \R^{m_1} \times \R^{m_2}$. This fact implies that $\alpha \leq \beta$. As a result, we have $\alpha = \beta$. It then follows from~\eqref{eq:q_sig_lam} and~\eqref{eq:theta_sigma} that
\begin{align*}
\theta_{\sigma, \rho}(\lambda_1, \lambda_2) 
&= -\inf \{ \nu_{\sigma}(\mu_1, \mu_2) + r^{\ast}_{\rho, \lambda}(-\mu_1, -\mu_2) \colon (\mu_1, \mu_2) \in \R^{m_1} \times \R^{m_2} \}
\\
&= \sup \{ -\nu_{\sigma}(\mu_1, \mu_2) - r^{\ast}_{\rho, \lambda}(-\mu_1, -\mu_2) \colon (\mu_1, \mu_2) \in \R^{m_1} \times \R^{m_2} \}
\\
&= \sup \{ - f(x) - r^{\ast}_{\rho, \lambda}(A_1 x - b_1 + \bar{s}_1(\sigma), A_2 x - b_2 + \bar{s}_2(\sigma)) \colon x \in \dom f \}
\\
&= -\inf \{ {\cal F}_{\rho, \sigma} (x, \lambda_1, \lambda_2) \colon x \in \dom f \}.
\end{align*}
Since $\theta_{\sigma, \rho}(\lambda_1, \lambda_2)$ is finite and $\dom f$ is nonempty, the problem of minimizing ${\cal F}_{\rho, \sigma} ( \, \cdot \, , \lambda_1, \lambda_2)$ over $\dom f$ has at least one $\varepsilon$-optimal solution.
\end{proof}

\noindent
Proposition~\ref{pro:welldef} guarantees the well-definedness of Algorithm~\ref{alg_alm_linear}. Finally, we prove its global convergence.

\begin{theorem}
Suppose that Assumptions~{\rm \ref{ass:ps_solvable}--\ref{asm:sigma_frac_zero}} are satisfied. 
Then, any accumulation point of $\{ x^{(k)} \}$ generated by Algorithm~\ref{alg_alm_linear} is an optimal solution of~\eqref{p1_linear_slack_opt}.
\end{theorem}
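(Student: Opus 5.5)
Let $\bar{x}$ be an accumulation point of $\{x^{(k)}\}$, with $x^{(k_j+1)} \to \bar{x}$ along a subsequence. The plan is to show two things: $\bar{x}$ is feasible for \eqref{p1_linear_slack_opt}, and $f(\bar{x}) \le f(x)$ for every feasible $x$ of \eqref{p1_linear_slack_opt}. Both will come from the $\varepsilon^{(k)}$-optimality of $x^{(k+1)}$ for ${\cal F}_{\rho^{(k)},\sigma^{(k)}}(\cdot,\widehat{\lambda}^{(k)})$, together with three facts: $\widehat{\lambda}^{(k)}$ stays in the compact set $\Gamma_1\times\Gamma_2$, $\bar{s}(\sigma^{(k)})\to s^\ast$ (Theorem~\ref{thm:sbar_sast}), and $f$ is closed (Assumption~\ref{ass:sast_solvable}(i)).

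For the comparison point I take $x^\sharp(\sigma)$, an optimal solution of (P$_{\bar{s}(\sigma)}$), which exists by Assumption~\ref{ass:ps_solvable} since $\bar{s}(\sigma)\in{\cal S}$. Because $x^\sharp(\sigma^{(k)})$ is feasible for (P$_{\bar{s}(\sigma^{(k)})}$), both penalty terms vanish there, so
\begin{align*}
f(x^{(k+1)}) + (\widehat{\lambda}^{(k)})^{\top} s^{(k+1)} + \frac{\rho^{(k)}}{2}\Vert s^{(k+1)}\Vert^2 \le \nu_{\sigma^{(k)}}(0,0) + \varepsilon^{(k)}.
\end{align*}
I will need an upper bound on $\nu_{\sigma^{(k)}}(0,0)$ tending to $\nu^\ast := \min(\mathrm{P}_{s^\ast})$. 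For this I use weak duality with the multiplier $\bar{\lambda}$ of \eqref{p1_linear_slack_opt}, which exists by Slater's condition as in Lemma~\ref{lem:dual_feasible}. For $x$ feasible for (P$_{\bar{s}(\sigma)}$),
\begin{align*}
f(x) \ge -\theta(\bar\lambda) + \bar\lambda^\top(b - Ax) = \nu^\ast + \bar\lambda^\top(\bar{s}(\sigma)-s^\ast),
\end{align*}
which gives the lower bound $\liminf \nu_{\sigma^{(k)}}(0,0)\ge \nu^\ast$. The upper semicontinuity $\limsup_k \nu_{\sigma^{(k)}}(0,0) \le \nu^\ast$ is the main obstacle. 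I would get it from the value function $v(s):=\min\{f(x): Ax-b+s=0\}$, which is convex with $s^\ast\in{\rm ri}(\dom v)$ by Slater's condition. A convex function is continuous relative to the relative interior of its domain, and $\bar{s}(\sigma^{(k)})\in{\cal S}=\dom v$ converges to $s^\ast$. Hence $\nu_{\sigma^{(k)}}(0,0)=v(\bar{s}(\sigma^{(k)}))\to v(s^\ast)=\nu^\ast$. The point requiring care is that the convergence takes place within the affine hull of $\dom v$, and this holds because every $\bar{s}(\sigma^{(k)})$ lies in ${\cal S}$.

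Next comes feasibility. I split into two cases. If $\rho^{(k)}$ stays bounded, the update rule forces $u^{(k+1)}\le\tau u^{(k)}$ eventually, so $u^{(k)}\to0$, i.e.\ $s^{(k+1)}\to0$. If $\rho^{(k)}\to\infty$, I divide the displayed inequality by $\rho^{(k)}$. Here I need $f(x^{(k_j+1)})$ bounded below along the subsequence, which follows from closedness of $f$ and $x^{(k_j+1)}\to\bar{x}$: since $f$ is closed proper convex, it is bounded below by an affine function. With $\widehat\lambda$ bounded and $\nu_{\sigma^{(k)}}(0,0)$ bounded above, this yields $\Vert s^{(k_j+1)}\Vert^2\le O(1/\rho^{(k_j)})\to0$. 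In both cases $A\bar{x}-b+s^\ast=\lim_j s^{(k_j+1)}=0$. Moreover $\bar{x}\in\dom f$, because $\liminf f(x^{(k_j+1)})<\infty$ by the bound below and $f$ is lower semicontinuous.

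Finally, optimality. From the displayed inequality, dropping the nonnegative penalty term, I get $f(x^{(k_j+1)})\le \nu_{\sigma^{(k_j)}}(0,0)+\varepsilon^{(k_j)} - (\widehat\lambda^{(k_j)})^\top s^{(k_j+1)}$. The last term tends to $0$ because $\widehat\lambda$ is bounded and $s^{(k_j+1)}\to0$. By lower semicontinuity of $f$ and the limit of $\nu_{\sigma^{(k)}}(0,0)$ from the second paragraph, $f(\bar{x})\le\nu^\ast$. Combined with feasibility, this shows $\bar{x}$ is an optimal solution of \eqref{p1_linear_slack_opt}.
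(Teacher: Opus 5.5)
Your proof is correct and follows essentially the same route as the paper. It uses the $\varepsilon^{(k)}$-optimality comparison with a minimizer of (P$_{\bar{s}(\sigma^{(k)})}$), continuity of the convex value function at $s^\ast \in {\rm ri}(\dom \omega)$ via Slater, the bounded/divergent $\rho^{(k)}$ case split for feasibility, and lower semicontinuity of $f$ for optimality. The weak-duality lower bound on $\nu_{\sigma^{(k)}}(0,0)$ is harmless but unnecessary, since only the $\limsup$ is used. Your generic affine minorant of $f$, in place of the paper's subgradient at the Slater point, is an inessential variation.
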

\begin{proof}
Let $\omega \colon \R^{m_1} \times \R^{m_2} \to \R \cup \{ +\infty \}$ be defined by
\begin{align*}
\omega(s_1, s_2) \coloneqq \min \{ f(x) \colon x \in \dom f, A_1 x - b_1 + s_1 = 0, A_2 x - b_2 + s_2 = 0 \},
\end{align*}
where we define $\omega(s_1, s_2) = + \infty$ if the optimization problem in $\omega(s_1,s_2)$ is infeasible. Note that $\omega$ is a proper convex function. We show that $s^{\ast} \in {\rm ri}(\dom \omega)$. By the definition of $\omega$, we have $\dom \omega = {\cal S} = b - A(\dom f)$. It then follows from item~(ii) of Assumption~\ref{ass:sast_solvable} that
\begin{align} \label{s_in_ridomw}
s^{\ast} = b - A\bar{x} \in b - A({\rm ri}(\dom f)) = {\rm ri}(b - A(\dom f)) = {\rm ri}(\dom \omega),
\end{align}
where the second equality follows from~\cite[Propositions~1.3.6 and~1.3.7]{B2009}. Since $\omega$ is continuous on ${\rm ri}(\dom \omega)$ by~\cite[Proposition~1.3.11]{B2009}, combining Theorem~\ref{thm:sbar_sast} and~\eqref{s_in_ridomw} derives
\begin{align}
\lim_{k \to \infty} \omega(\bar{s}_1(\sigma^{(k)}), \bar{s}_2(\sigma^{(k)})) = \omega(s_1^{\ast}, s_2^{\ast}). \label{lim:omega}
\end{align}
Let $k \in \mathbb{N}$ be arbitrary. Because Assumption~\ref{ass:ps_solvable} guarantees the solvability of~(P$_{\bar{s}(\sigma^{(k)})}$), we denote by $y^{(k)}$ its minimizer. Recall that $x^{(k)}$ is an $\varepsilon^{(k-1)}$-optimal solution of $\inf \{ {\cal F}_{\rho^{(k-1)}, \sigma^{(k-1)}}(x, \widehat{\lambda}_1^{(k-1)}, \widehat{\lambda}_2^{(k-1)}) \colon x \in \dom f \}$. Then, we have
\begin{align} 
{\cal F}_{\rho^{(k-1)}, \sigma^{(k-1)}}(x^{(k)}, \widehat{\lambda}_1^{(k-1)}, \widehat{\lambda}_2^{(k-1)}) 
&\leq {\cal F}_{\rho^{(k-1)}, \sigma^{(k-1)}}(y^{(k-1)}, \widehat{\lambda}_1^{(k-1)}, \widehat{\lambda}_2^{(k-1)}) + \varepsilon^{(k-1)} \nonumber
\\
&= \omega(\bar{s}_1(\sigma^{(k-1)}), \bar{s}_2(\sigma^{(k-1)})) + \varepsilon^{(k-1)}, \label{ineq:cal_F_xz}
\end{align}
where the last equality is derived from $A_1 y^{(k-1)} - b_1 + \bar{s}_1(\sigma^{(k-1)}) = 0$, $A_2 y^{(k-1)} - b_2 + \bar{s}_2(\sigma^{(k-1)}) = 0$, and $f(y^{(k-1)}) = \omega(\bar{s}_1(\sigma^{(k-1)}), \bar{s}_2(\sigma^{(k-1)}))$. 
\par
Let $x^{\ast} \in \R^{n}$ be an arbitrary accumulation point of $\{ x^{(k)} \}$. Then, there exists ${\cal K} \subset \mathbb{N}$ such that $\{ x^{(k)} \}_{k \in {\cal K}}$ converges to $x^{\ast}$. From now on, we will show that
\begin{align}
A_1 x^{\ast} - b_1 + s_1^{\ast} = 0, \quad A_2 x^{\ast} - b_2 + s_2^{\ast} = 0. \label{eq:xast_feasible}
\end{align}
Now, Theorem~\ref{thm:sbar_sast} guarantees that $\{ \bar{s}_1(\sigma^{(k)}) \}$ and $\{ \bar{s}_2(\sigma^{(k)}) \}$ converge to $s_1^{\ast}$ and $s_2^{\ast}$, respectively. Hence, to prove~\eqref{eq:xast_feasible}, it suffices to verify that
\begin{align} \label{ineq:2norm}
\lim_{{\cal K} \ni k \to \infty} \left\| A_1 x^{(k)} - b_1 + \bar{s}_1(\sigma^{(k-1)}) \right\| = 0, \quad
\lim_{{\cal K} \ni k \to \infty} \left\| A_2 x^{(k)} - b_2 + \bar{s}_2(\sigma^{(k-1)}) \right\| = 0. 
\end{align}
Recall that $\{ \rho^{(k)} \}$ is a monotonically nondecreasing sequence. Then, we can consider two cases: (a) $\{ \rho^{(k)} \}$ is bounded; (b) $\{ \rho^{(k)} \}$ diverges.
\par
{\it Case} (a). Since $\{ \rho^{(k)} \}$ is bounded, its updating rule of Step~3 in Algorithm~\ref{alg_alm_linear} ensures there exists $\widehat{k} \in \mathbb{N}$ satisfying $\rho^{(k)} = \rho^{(\widehat{k})}$ for all $k \geq \widehat{k}$. Moreover, again using the updating rule of $\{ \rho^{(k)} \}$ yields $u^{(k)} \leq \tau u^{(k-1)} \leq \tau^2 u^{(k-2)} \leq \cdots \leq \tau^{k -\widehat{k}} u^{(\widehat{k})}$ for all $k \geq \widehat{k}$, that is, $u^{(k)} \to 0~(k \to \infty)$ because $\tau \in (0,1)$. It then follows from the definitions of $\{ s_1^{(k)} \}$, $\{ s_2^{(k)} \}$, and $\{ u^{(k)} \}$ that $\| A_1 x^{(k)} - b_1 + \bar{s}_1(\sigma^{(k-1)}) \| + \| A_2 x^{(k)} - b_2 + \bar{s}_2(\sigma^{(k-1)}) \| = u^{(k)} \to  0 ~ (k \to \infty)$. Thus, we can verify that~\eqref{ineq:2norm} holds in this case.
\par
{\it Case} (b). Dividing both sides of~\eqref{ineq:cal_F_xz} by~$\rho^{(k-1)}$ yields
\begin{align}
&\left\| A_1 x^{(k)} - b_1 + \bar{s}_1(\sigma^{(k-1)}) \right\|^2 + \left\| A_2 x^{(k)} - b_2 + \bar{s}_2(\sigma^{(k-1)}) \right\|^2 \nonumber
\\
&\leq \frac{2}{\rho^{(k-1)}} \omega(\bar{s}_1(\sigma^{(k-1)}), \bar{s}_2(\sigma^{(k-1)})) + \frac{2}{\rho^{(k-1)}} \varepsilon^{(k-1)} - \frac{2}{\rho^{(k-1)}} f(x^{(k)}) \nonumber
\\
& \qquad - \frac{2}{\rho^{(k-1)}} (\widehat{\lambda}_1^{(k-1)})^{\top} (A_1 x^{(k)} - b_1 + \bar{s}_1(\sigma^{(k-1)})) - \frac{2}{\rho^{(k-1)}} (\widehat{\lambda}_2^{(k-1)})^{\top} (A_2 x^{(k)} - b_2 + \bar{s}_2(\sigma^{(k-1)})). \label{ineq:feasibility_Psast}
\end{align}
By the boundedness of $\{ x^{(k)} \}_{k \in {\cal K}}$, there exists $\Gamma > 0$ such that $\| x^{(k)} \| \leq \Gamma$ for all $k \in {\cal K}$. Moreover, since the function $f$ is closed proper convex and $\bar{x} \in {\rm ri}(\dom f)$ holds thanks to Assumption~\ref{ass:sast_solvable}, it follows from~\cite[Theorem~23.4]{R1972} that $\alpha \in \partial f(\bar{x})$ exists, that is, $\alpha^{\top}x + \beta \leq f(x)$ for all $x \in \R^{n}$, where $\beta \coloneqq f(\bar{x}) - \alpha^{\top} \bar{x}$. These facts imply that $-f(x^{(k)}) \leq -\alpha^{\top} x^{(k)} - \beta \leq \sup \{ -\alpha^{\top} x - \beta \colon \| x \| \leq \Gamma \} < +\infty$ for all $k \in {\cal K}$. Meanwhile, it is clear that $\{ \widehat{\lambda}_1^{(k)} \}$, $\{ \widehat{\lambda}_2^{(k)} \}$, $\{ \bar{s}_1(\sigma^{(k)}) \}$, $\{ \bar{s}_2(\sigma^{(k)}) \}$, and $\{ \omega(\bar{s}_1(\sigma^{(k)}), \bar{s}_2(\sigma^{(k)})) \}$ are bounded. Then, taking ${\cal K} \ni k \to \infty$ in~\eqref{ineq:feasibility_Psast} implies~\eqref{ineq:2norm}.
\par
As a result, we conclude that~\eqref{ineq:2norm} is satisfied for all the cases, and hence~\eqref{eq:xast_feasible} also holds. Now, we recall that $\{ \widehat{\lambda}_1^{(k)} \}$ and $\{ \widehat{\lambda}_2^{(k)} \}$ are bounded and recall that $\{ \varepsilon^{(k)} \}$ converges to zero. By inequality~\eqref{ineq:cal_F_xz} and the definition of ${\cal F}_{\rho^{(k-1)}, \sigma^{(k-1)}}$, we have $f(x^{(k)}) + (\widehat{\lambda}_1^{(k-1)})^{\top} (A_1 x^{(k)} - b_1 + \bar{s}_1(\sigma^{(k-1)})) + (\widehat{\lambda}_2^{(k-1)})^{\top} (A_2 x^{(k)} - b_2 + \bar{s}_2(\sigma^{(k-1)})) \leq \omega(\bar{s}_1(\sigma^{(k-1)}), \bar{s}_2(\sigma^{(k-1)})) + \varepsilon^{(k-1)}$. Then, taking $\liminf_{k \to \infty}$ and exploiting~\eqref{lim:omega},~\eqref{ineq:2norm}, and the lower semicontinuity of $f$ lead to $f(x^{\ast}) \leq \omega(s_1^{\ast}, s_2^{\ast})$. This fact and~\eqref{eq:xast_feasible} mean that $x^{\ast}$ is an optimal solution of~\eqref{p1_linear_slack_opt}. 
\end{proof}

\section{Numerical experiments}
This section reports numerical results to demonstrate the validity of the convergence analysis of Algorithm~\ref{alg_alm_linear}. All experiments were carried out in MATLAB R2025a on a machine equipped with an Intel Core i9-9900K CPU (3.60 GHz). We consider a convex quadratic optimization problem arising from a network flow model. The problem is defined on a grid network consisting of $20 \times 20$ nodes, where each node is connected to its adjacent nodes (up, down, left, and right) via bidirectional edges. The network has $m=400$ nodes and $n=1520$ edges.
\par
We briefly describe the network flow problem. Let $A \in \R^{m \times n}$ denote the node-edge incidence matrix of the network, and let $b \in \R^{m}$ represent the supply-demand vector. In the experiments, all nodes on the top row are designated as supply nodes, whereas all nodes on the bottom row are designated as demand nodes. Accordingly, each component of $b$ is set to $1$ for demand nodes, $-1$ for supply nodes, and $0$ for intermediate nodes. The decision variable $x \in \R^{n}$ represents the flow on each edge, and the equation $Ax=b$ represents the flow-balance constraints at all nodes.
\par
To assign priorities to the constraints, we partition the rows of $A$ and $b$ according to the roles of the nodes. To this end, we express $A$ and $b$ as follows:
\begin{align*}
A = 
\begin{bmatrix}
a_1^{\top}
\\
\vdots
\\
a_m^{\top}
\end{bmatrix}
, \quad b = 
\begin{bmatrix}
[b]_1
\\
\vdots
\\
[b]_m
\end{bmatrix}
,
\end{align*}
where $a_{\ell} \in \R^{n}$ and $[b]_{\ell} \in \R$ for $\ell \in \{1, \ldots, m \}$.
The constraints corresponding to the demand nodes and the intermediate nodes are treated as higher-priority constraints, whereas those corresponding to the supply nodes are treated as lower-priority constraints. More precisely, let $\{ i_1, \ldots, i_p \}$ be the set of indices of the demand and intermediate nodes, and let $\{ j_1, \ldots, j_q \}$ be the set of indices of the supply nodes. Then, we define
\begin{align*}
A_1 \coloneqq
\begin{bmatrix}
a_{i_1}^{\top}
\\
\vdots
\\
a_{i_p}^{\top}
\end{bmatrix}
, \quad b_1 \coloneqq
\begin{bmatrix}
[b]_{i_1}
\\
\vdots
\\
[b]_{i_p}
\end{bmatrix}
, \quad A_2 \coloneqq
\begin{bmatrix}
a_{j_1}^{\top}
\\
\vdots
\\
a_{j_q}^{\top}
\end{bmatrix}
, \quad b_2 \coloneqq
\begin{bmatrix}
[b]_{j_1}
\\
\vdots
\\
[b]_{j_q}
\end{bmatrix}
.
\end{align*}
In this setting, the higher-priority constraints represent demand satisfaction and the flow-balance conditions at intermediate nodes, while the lower-priority constraints represent the supply conditions.
\par
To generate infeasible instances, the supply values in the lower-priority constraints are perturbed by replacing $b_2$ with $b_2 + \kappa e$, where $\kappa \geq 0$ is a parameter. Since the components of $b_2$ are equal to $-1$, this replacement changes each supply value from $-1$ to $-1+\kappa$. When $\kappa=0$, the total supply and demand are balanced, and the problem is feasible. When $\kappa>0$, the total available supply is reduced, and hence the full system of flow-balance constraints becomes infeasible. In this case, the proposed method is expected to preserve the higher-priority constraints as much as possible and to absorb the unavoidable inconsistency through the lower-priority supply constraints.
\par
The objective function is defined as a convex quadratic function with a positive semidefinite matrix $Q \in \R^{n \times n}$ and a vector $c \in \R^{n}$. Thus, the problem can be formulated as follows:
\begin{align}
\begin{aligned} \label{pro:network_flow}
& \mini_{x \in \R^{n}} && \frac{1}{2} x^{\top} Q x + c^{\top} x
\\
& \subj && A_1 x - b_1 = 0, ~ A_2 x - b_2 - \kappa e = 0.
\end{aligned} \tag{Q$_{\kappa}$}
\end{align}

\par
In this numerical experiment, we investigate the performance of Algorithm~\ref{alg_alm_linear} for both feasible and infeasible problems by solving~\eqref{pro:network_flow} with $\kappa = 0$ and $\kappa = \frac{1}{2}$, corresponding to the feasible and infeasible cases, respectively. For both problems, we set $Q \coloneqq I$ and $c \coloneqq \frac{1}{10} e$. The parameters and the compact sets were selected as follows:
\begin{gather} \label{param_init1}
\tau \coloneqq 0.1, ~~ \gamma \coloneqq 5, ~~ \Gamma_1 \coloneqq \{ v \in \R^{m_1} \colon -10^{6} e \leq v \leq 10^{6} e \}, ~~ \Gamma_2 \coloneqq \{ v \in \R^{m_2} \colon -10^{6} e \leq v \leq 10^{6} e \},
\end{gather} 
The initial values and the sequences $\{ \sigma_1^{(k)} \}$ and $\{ \sigma_2^{(k)} \}$ were selected as follows:
\begin{gather} 
\begin{gathered} \label{param_init2}
s_1^{(0)} = 0, ~~ s_2^{(0)} = 0, ~~ \widehat{\lambda}_1^{(0)} = 0, ~~ \widehat{\lambda}_2^{(0)} = 0, ~~ u^{(0)} \coloneqq 10^3, ~~ \rho^{(0)} \coloneqq 1,
\\
\sigma_1^{(k+1)} \coloneqq 10 \sigma_1^{(k)}, ~~ \sigma_1^{(0)} \coloneqq 1, ~~ \sigma_2^{(k+1)} \coloneqq 1.1 \sigma_2^{(k)}, ~~ \sigma_2^{(0)} \coloneqq 1. 
\end{gathered}
\end{gather}
In the implementation, the infeasibility control problem in Step~1 of Algorithm~\ref{alg_alm_linear} was solved by MATLAB's \texttt{quadprog} with the optimality tolerance set to $10^{-8}$. Since the optimality condition for the augmented Lagrangian subproblem in Step~2 reduces to a system of linear equations, this subproblem was solved by directly solving the resulting linear system. The termination criterion was set to $E^{(k)} \leq 10^{-6}$, where $E^{(k)}$ denotes the residual of the KKT conditions for problems~(P$_{\bar{s}(\sigma^{(k)})}$) evaluated at $(x^{(k)}, \lambda_1^{(k)}, \lambda_2^{(k)})$, that is,
\begin{gather*}
E^{(k)} \coloneqq \Vert Q x^{(k)} + c + A_1^{\top} \lambda_1^{(k)} + A_2^{\top} \lambda_2^{(k)} \Vert + \Vert A_1 x^{(k)} - b_1 + \bar{s}_1(\sigma^{(k)}) \Vert + \Vert A_2 x^{(k)} - b_2 + \bar{s}_2(\sigma^{(k)}) \Vert.
\end{gather*}
Moreover, for the infeasible problem, we compare the performance of Algorithm~\ref{alg_alm_linear} with that of the standard augmented Lagrangian (AL) method, which corresponds to the case in which $(\bar{s}_1(\sigma^{(k)}), \bar{s}_2(\sigma^{(k)})) = (0,0)$ is adopted in Step~1 of Algorithm~\ref{alg_alm_linear}. The standard AL method used the same values of $\tau$, $\gamma$, $\Gamma_1$, $\Gamma_2$, $s_1^{(0)}$, $s_2^{(0)}$, $\widehat{\lambda}_1^{(0)}$, $\widehat{\lambda}_2^{(0)}$, $u^{(0)}$, and $\rho^{(0)}$ as those used for Algorithm~\ref{alg_alm_linear}. The sequences $\{ \sigma_1^{(k)} \}$ and $\{ \sigma_2^{(k)} \}$ were not used in the standard AL method.
%The parameters and the initial points of the standard AL method were the same as those in~\eqref{param_init1} and~\eqref{param_init2} except for $\sigma_1^{(0)}$ and $\sigma_2^{(0)}$.
\par
Numerical results are reported in Tables~\ref{Results:feasible_Alg1}--\ref{Results:infeasible_AL}. To facilitate their interpretation, for each $k \in \mathbb{N}$, we define the following quantities:
\begin{gather*}
r_1^{(k)} \coloneqq \Vert \bar{s}_1(\sigma^{(k)}) - s_1^{\ast} \Vert, \quad r_2^{(k)} \coloneqq \Vert \bar{s}_2(\sigma^{(k)}) - s_2^{\ast} \Vert,
\end{gather*}
where $s_1^{\ast}$ and $s_2^{\ast}$ are defined by~\eqref{eq:true_solution} and~\eqref{eq:true_solution2}, respectively.
\par
Tables~\ref{Results:feasible_Alg1} and~\ref{Results:infeasible_Alg1} report the numerical results of Algorithm~\ref{alg_alm_linear} for the feasible and infeasible problems, respectively. In both cases, the residual measure $E^{(k)}$ decreases steadily and reaches the termination criterion within a small number of iterations. This indicates that the proposed method successfully solves the shifted problems generated by the infeasibility control framework. For the feasible problem, the hierarchically optimal shift $s^{\ast}$ is equal to the zero vector. Accordingly, the quantities $r_1^{(k)}$ and $r_2^{(k)}$ remain very small throughout the iterations, indicating that the approximate shift $\bar{s}(\sigma^{(k)})$ is essentially zero. The norms $\|s_1^{(k)}\|$ and $\|s_2^{(k)}\|$ also converge to zero, which is consistent with the feasibility of the original problem. For the infeasible problem, the original flow-balance constraints cannot be satisfied simultaneously because the available supply is reduced. Nevertheless, Table~\ref{Results:infeasible_Alg1} shows that $r_1^{(k)}$ and $r_2^{(k)}$ decrease, which means that the approximate shift $\bar{s}(\sigma^{(k)})$ approaches the hierarchically optimal shift $s^{\ast}$. Thus, the proposed method represents the unavoidable violation caused by the reduced supply through the shift variables, while keeping the violation of the high-priority demand and flow-balance constraints small. Moreover, the multiplier norms remain bounded, and the penalty parameter $\rho^{(k)}$ stabilizes after several iterations.
\par
In contrast, Table~\ref{Results:infeasible_AL} shows that the standard AL method behaves differently for the infeasible problem. Since no shift variables are introduced, the method attempts to enforce inconsistent constraints. As a result, the penalty parameter $\rho^{(k)}$ keeps increasing, and the multiplier norms grow rapidly. This behavior illustrates the advantage of the proposed infeasibility control framework over the standard AL method for infeasible problems with prioritized constraints.
\bigskip

\begin{table}[htbp]
\caption{Numerical results of Algorithm~\ref{alg_alm_linear} for the feasible problem} \label{Results:feasible_Alg1}
\vspace{-1mm}
\centering
\begin{tabular}{rcccccccccc}
\hline
$k$	&	$E^{(k)}$	&	$\| s_1^{(k)} \|$	&	$\| s_2^{(k)} \|$	&	$r_1^{(k)}$	&	$r_2^{(k)}$	&	$\rho^{(k)}$	&	$\| \lambda_1^{(k)} \|$	&	$\| \lambda_2^{(k)} \|$	\\
% $k$	&	$E^{(k)}$	&	$\| s_1^{(k)} \|$	&	$\| s_2^{(k)} \|$	&	$\| \bar{s}_1(\sigma^{(k)}) \|$	&	$\| \bar{s}_1(\sigma^{(k)}) \|$	&	$\rho^{(k)}$	\\
\hline
%0 & 1.28e+01 & 4.47e+00 & 4.47e+00 & 0.00e+00 & 0.00e+00 & 1.00e+00 & 0.00e+00 & 0.00e+00 \\
1 & 5.12e+00 & 2.89e+00 & 2.24e+00 & 2.69e$-$09 & 1.07e$-$09 & 1.00e+00 & 2.89e+00 & 2.24e+00 \\
2 & 4.04e+00 & 2.55e+00 & 1.49e+00 & 3.10e$-$10 & 6.88e$-$10 & 5.00e+00 & 5.38e+00 & 3.73e+00 \\
3 & 2.75e+00 & 1.94e+00 & 8.08e$-$01 & 4.78e$-$11 & 1.73e$-$10 & 2.50e+01 & 1.47e+01 & 7.77e+00 \\
4 & 1.17e+00 & 8.71e$-$01 & 3.02e$-$01 & 5.45e$-$12 & 2.07e$-$11 & 1.25e+02 & 3.60e+01 & 1.53e+01 \\
5 & 1.63e$-$01 & 1.22e$-$01 & 4.08e$-$02 & 5.54e$-$13 & 2.77e$-$12 & 6.25e+02 & 5.11e+01 & 2.04e+01 \\
6 & 5.12e$-$03 & 3.84e$-$03 & 1.28e$-$03 & 5.38e$-$14 & 1.61e$-$11 & 6.25e+02 & 5.35e+01 & 2.12e+01 \\
7 & 1.61e$-$04 & 1.21e$-$04 & 4.02e$-$05 & 5.77e$-$15 & 1.49e$-$10 & 6.25e+02 & 5.36e+01 & 2.12e+01 \\
8 & 5.07e$-$06 & 3.81e$-$06 & 1.26e$-$06 & 3.29e$-$15 & 1.09e$-$09 & 6.25e+02 & 5.36e+01 & 2.12e+01 \\
9 & 1.60e$-$07 & 1.20e$-$07 & 3.98e$-$08 & 3.03e$-$15 & 2.30e$-$08 & 6.25e+02 & 5.36e+01 & 2.12e+01 \\
\hline
\end{tabular}
\end{table}

\begin{table}[htbp]
\caption{Numerical results of Algorithm~\ref{alg_alm_linear} for the infeasible problem} \label{Results:infeasible_Alg1}
\vspace{-1mm}
\centering
\begin{tabular}{rcccccccccc}
\hline
$k$	&	$E^{(k)}$	&	$\| s_1^{(k)} \|$	&	$\| s_2^{(k)} \|$	&	$r_1^{(k)}$	&	$r_2^{(k)}$	&	$\rho^{(k)}$	&	$\| \lambda_1^{(k)} \|$	&	$\| \lambda_2^{(k)} \|$	\\
\hline
%0 & 1.06e+01 & 4.47e+00 & 2.24e+00 & 3.42e$-$15 & 2.24e+00 & 1.00e+00 & 0.00e+00 & 0.00e+00 \\
1 & 3.79e+00 & 2.56e+00 & 1.23e+00 & 4.87e$-$01 & 2.12e+00 & 1.00e+00 & 2.56e+00 & 1.23e+00 \\
2 & 3.36e+00 & 2.21e+00 & 1.15e+00 & 3.47e$-$01 & 1.51e+00 & 5.00e+00 & 4.71e+00 & 2.38e+00 \\
3 & 2.72e+00 & 1.85e+00 & 8.70e$-$01 & 9.59e$-$02 & 4.18e$-$01 & 2.50e+01 & 1.35e+01 & 6.73e+00 \\
4 & 1.22e+00 & 8.88e$-$01 & 3.28e$-$01 & 1.27e$-$02 & 5.52e$-$02 & 1.25e+02 & 3.53e+01 & 1.49e+01 \\
5 & 1.71e$-$01 & 1.27e$-$01 & 4.34e$-$02 & 1.42e$-$03 & 6.20e$-$03 & 6.25e+02 & 5.10e+01 & 2.04e+01 \\
6 & 5.43e$-$03 & 4.06e$-$03 & 1.37e$-$03 & 1.57e$-$04 & 6.84e$-$04 & 6.25e+02 & 5.35e+01 & 2.12e+01 \\
7 & 1.79e$-$04 & 1.34e$-$04 & 4.57e$-$05 & 1.73e$-$05 & 7.53e$-$05 & 6.25e+02 & 5.36e+01 & 2.12e+01 \\
8 & 6.59e$-$06 & 4.84e$-$06 & 1.75e$-$06 & 1.90e$-$06 & 8.28e$-$06 & 6.25e+02 & 5.36e+01 & 2.12e+01 \\
9 & 3.12e$-$07 & 2.23e$-$07 & 8.91e$-$08 & 2.09e$-$07 & 9.11e$-$07 & 6.25e+02 & 5.36e+01 & 2.12e+01 \\
\hline
\end{tabular}
\end{table}

\begin{table}[htbp]
\caption{Numerical results of the standard AL method for the infeasible problem} \label{Results:infeasible_AL}
\vspace{-1mm}
\centering
\begin{tabular}{rcccccccccc}
\hline
$k$	&	$\rho^{(k)}$	&	$\| \lambda_1^{(k)} \|$	&	$\| \lambda_2^{(k)} \|$	\\
\hline
%0 & 1.00e+00 & 0.00e+00 & 0.00e+00 & \\
1 & 1.00e+00 & 2.66e+00 & 1.12e+00 & \\
2 & 5.00e+00 & 4.82e+00 & 1.86e+00 & \\
3 & 2.50e+01 & 1.26e+01 & 3.86e+00 & \\
4 & 1.25e+02 & 3.30e+01 & 6.39e+00 & \\
5 & 6.25e+02 & 8.87e+01 & 3.83e+00 & \\
6 & 3.13e+03 & 3.86e+02 & 7.29e+01 & \\
7 & 1.56e+04 & 1.89e+03 & 4.19e+02 & \\
8 & 7.81e+04 & 9.41e+03 & 2.14e+03 & \\
9 & 3.91e+05 & 4.70e+04 & 1.08e+04 & \\
10 & 1.95e+06 & 2.35e+05 & 5.39e+04 & \\
11 & 9.77e+06 & 1.18e+06 & 2.70e+05 & \\
12 & 4.88e+07 & 5.88e+06 & 1.35e+06 & \\
13 & 2.44e+08 & 2.94e+07 & 6.74e+06 & \\
14 & 1.22e+09 & 1.37e+08 & 3.15e+07 & \\
15 & 6.10e+09 & 6.13e+08 & 1.41e+08 & \\
16 & 3.05e+10 & 2.99e+09 & 6.87e+08 & \\
17 & 1.53e+11 & 1.49e+10 & 3.42e+09 & \\
18 & 7.63e+11 & 7.44e+10 & 1.71e+10 & \\
19 & 3.81e+12 & 3.72e+11 & 8.53e+10 & \\
20 & 1.91e+13 & 1.86e+12 & 4.27e+11 & \\
\hline
\end{tabular}
\end{table}

\FloatBarrier

\section{Conclusion}
In this paper, we studied convex optimization problems with prioritized equality constraints, particularly focusing on the infeasible case. To handle the hierarchy among constraints, we introduced the concept of a hierarchically optimal shift and reformulated the original problem as a shifted problem equipped with this optimal perturbation. We proposed an augmented Lagrangian method that incorporates the infeasibility control framework, which computes an approximate hierarchically optimal shift at each iteration. By analyzing a parameterized auxiliary problem regarding the approximate shift, we established that the approximate shift converges to the hierarchically optimal shift under suitable parameter updates. Moreover, we proved that the proposed algorithm is well-defined and converges to an optimal solution of the shifted problem equipped with the hierarchically optimal shift. The numerical experiments showed that the proposed method performs effectively for both feasible and infeasible problems, and that the infeasibility framework appropriately reflects the priority structure of constraints, whereas standard augmented Lagrangian methods may fail in the infeasible case.
\par
Future work includes extending the proposed framework to inequality constraints and more general convex composite problems, as well as developing accelerated variants and investigating local convergence properties. Another important direction is to extend the proposed approach to other optimization methods, such as sequential quadratic programming methods and interior-point methods, in order to incorporate prioritized constraints in more general nonlinear optimization settings.

\bibliographystyle{plain}
\bibliography{reference}

@book{BauschkeCombettes2010,
  title={{Convex Analysis and Monotone Operator Theory in Hilbert Spaces}},
  author={Bauschke, H. H. and P. L. Combettes},
  year={2010},
  publisher={Springer}
}

@book{B2009,
  title={Convex optimization theory},
  author={Bertsekas, D. P.},
  year={2009},
  publisher={Athena Scientific}
}

@article{BCN2010,
  title={Infeasibility detection and SQP methods for nonlinear optimization},
  author={Byrd, R. H. and Curtis, F. E. and Nocedal, J.},
  journal={SIAM Journal on Optimization},
  volume={20},
  number={5},
  pages={2281--2299},
  year={2010},
  publisher={SIAM}
}

@article{BCW2014,
  title={A sequential quadratic optimization algorithm with rapid infeasibility detection},
  author={Burke, J. V. and Curtis, F. E. and Wang, H.},
  journal={SIAM Journal on Optimization},
  volume={24},
  number={2},
  pages={839--872},
  year={2014},
  publisher={SIAM}
}

@article{BDM+2011,
  title={Robust optimization for emergency logistics planning: Risk mitigation in humanitarian relief supply chains},
  author={Ben-Tal, A. and Do Chung, B. and Mandala, S. R. and Yao, T.},
  journal={Transportation Research Part B: Methodological},
  volume={45},
  number={8},
  pages={1177--1189},
  year={2011},
  publisher={Elsevier}
}

@article{Co2002,
  title={Hard-constrained inconsistent signal feasibility problems},
  author={Combettes, P. L. and Bondon, P.},
  journal={IEEE Transactions on Signal Processing},
  volume={47},
  number={9},
  pages={2460--2468},
  year={1999},
  publisher={IEEE}
}

@article{CG2016,
  title={How the augmented Lagrangian algorithm can deal with an infeasible convex quadratic optimization problem},
  author={Chiche, A. and Gilbert, J. C.},
  journal={Journal of Convex Analysis},
  volume={23},
  pages={425--459},
  number={2},
  year={2016}
}

@article{DLS2020,
  title={A primal-dual interior-point method capable of rapidly detecting infeasibility for nonlinear programs},
  author={Dai, Y. H. and Liu, X. W. and Sun, J.},
  journal={Journal of Industrial and Management Optimization},
  volume={16},
  number={2},
  pages={1009--1035},
  year={2020},
  publisher={AIMS}
}

@article{DZ2023,
  title={The augmented Lagrangian method can approximately solve convex optimization with least constraint violation},
  author={Dai, Y. H. and Zhang, L.},
  journal={Mathematical Programming},
  volume={200},
  pages={633--667},
  year={2023},
  publisher={Springer}
}

@book{Eh2005,
  title={Multicriteria optimization},
  author={Ehrgott, M.},
  year={2005},
  publisher={Springer}
}

@article{FNW2007,
  title={Gradient projection for sparse reconstruction: Application to compressed sensing and other inverse problems},
  author={Figueiredo, M. A. T. and Nowak, R. D. and Wright, S. J.},
  journal={IEEE Journal of Selected Topics in Signal Processing},
  volume={1},
  number={4},
  pages={586--597},
  year={2007},
  publisher={IEEE}
}

@article{YMC2009,
  title={Evacuation transportation planning under uncertainty: a robust optimization approach},
  author={Yao, T. and Mandala, S. R. and Chung, B. D.},
  journal={Networks and Spatial Economics},
  volume={9},
  pages={171--189},
  year={2009},
  publisher={Springer}
}

@article{VSF1999,
  title={Infeasibility handling in linear MPC subject to prioritized constraints},
  author={Vada, J. and Slupphaug, O. and Foss, B. A.},
  journal={IFAC Proceedings Volumes},
  volume={32},
  number={2},
  pages={6763--6768},
  year={1999},
  publisher={Elsevier}
}

@article{VSJ+2001,
  title={Linear MPC with optimal prioritized infeasibility handling: application, computational issues and stability},
  author={Vada, J. and Slupphaug, O. and Johansen, T. A. and Foss, B. A.},
  journal={Automatica},
  volume={37},
  number={11},
  pages={1835--1843},
  year={2001},
  publisher={Elsevier}
}

@book{Mi1999,
  title={Nonlinear multiobjective optimization},
  author={Miettinen, K.},
  volume={12},
  year={1999},
  publisher={Springer Science \& Business Media}
}

@book{R1972,
  title={Convex analysis},
  author={Rockafellar, R. T.},
  year={1972},
  publisher={Princeton University Press}
}

@article{TJR1998,
  title={Goal programming for decision making: An overview of the current state-of-the-art},
  author={Tamiz, M. and Jones, D. and Romero, C.},
  journal={European Journal of operational research},
  volume={111},
  number={3},
  pages={569--581},
  year={1998},
  publisher={Elsevier}
}

@article{VF2009,
  title={Probing the Pareto frontier for basis pursuit solutions},
  author={Van Den Berg, E. and Friedlander, M. P.},
  journal={SIAM Journal on Scientific Computing},
  volume={31},
  number={2},
  pages={890--912},
  year={2009},
  publisher={SIAM}
}
\end{document}